\newtheorem{theorem}{Theorem}[section]
\newtheorem{lemma}{Lemma}[section]
\newtheorem{pr}{Proposition}[section]
\title
{THE BERGMAN KERNEL FOR INTERSECTION OF TWO COMPLEX ELLIPSOIDS}
\author{\normalsize Tomasz Beberok \\
\small Faculty of Mathematics and Computer Science, Jagiellonian University,\\
\small Lojasiewicza 6, 30-048 Krakow, Poland \\}
\date{}
\begin{document}

\begin{center}
  \textbf{The Bergman kernel for intersection of two complex ellipsoids}
\end{center}
\vskip1em
\begin{center}
  Tomasz Beberok
\end{center}

\vskip3em

In this paper we obtain the closed forms of some hypergeometric functions. As an application, we obtain the explicit forms of the Bergman kernel functions for intersection of two complex ellipsoids $\{z \in \mathbb{C}^3 \colon |z_1|^p + |z_2|^q < 1, \quad |z_1|^p + |z_3|^r < 1\}$. We consider cases $p=6, q= r= 2$ and $p=q=r=2$. We also investigate the Lu Qi-Keng problem for $p=q=r=2$.
\vskip1em

\textbf{Keyword:} Bergman kernel, Lu Qi-Keng problem, hypergeometric functions
\vskip1em
\textbf{AMS Subject Classifications:} 32A25;  33D70

\section{Introduction}

In 1921, S. Bergman introduced a kernel function, which is now known as the Bergman kernel function. It is well known that there exists a unique Bergman kernel function for each bounded domain in $\mathbb{C}^n$. Computation of the Bergman kernel function by explicit formulas is an important research direction in several
complex variables. Let $D$ be a bounded domain in $\mathbb{C}^n$. The Bergman space $L^2_a(D)$ is the space of all square integrable holomorphic functions on $D$. Then the Bergman kernel $K_D(z,w)$ is defined \cite{BE} by
\begin{align*}
K_D(z,w)= \sum_{j=0}^{\infty} \phi_j(z) \overline{\phi_j(w)}, \quad (z,w) \in D \times D,
\end{align*}
where $\{\phi_j(·) \colon  j = 0, 1, 2, . . .\}$ is a complete orthonormal basis for $L^2_a(D)$. If D is the Hermitian unit ball $B_n$ defined by
\begin{align*}
B_n=\{z \in \mathbb{C}^n \colon |z_1|^2 + |z_2|^2 + \ldots + |z_n|^2 < 1 \},
\end{align*}
It is easy to see that $z^{\alpha}$, $\alpha \in (\mathbb{Z}_{+})^n $ form an orthogonal basis of $L^2_a(B_n)$. A direct computation shows that $\| z^{\alpha} \| = \sqrt{ \frac{\alpha ! \pi^n }{(n + | \alpha|)!} }$. So the functions $\varphi_{\alpha} = \sqrt{ \frac{(n + | \alpha|)!}{\alpha ! \pi^n} } z^{\alpha}$, $\alpha \in (\mathbb{Z}_{+})^n $, form an orthonormal basis of $L^2_a(D)$. An easy computation gives:

\begin{align*}
K_{B_n}(z,w)=\frac{n!}{\pi^n} \frac{1}{(1- \langle z,w \rangle)^{n+1}},
\end{align*}
where $\langle z,w \rangle := z_1\overline{w}_1 + \ldots + z_n\overline{w}_n$. J.-D. Park in \cite{Park1} compute Bergman kernel for nonhomogeneous domain $$D_{q_1,q_2}=\{(z_1, z_2) \in \mathbb{C}^2 : |z_1|^{4/q_1} + |z_2|^{4/q_2} < 1\}$$ for any positive integers $q_1$ and $q_2$. The goal of this paper is to give Bergman kernel for $\{z \in \mathbb{C}^3 \colon |z_1|^p + |z_2|^q < 1, \quad |z_1|^p + |z_3|^r < 1\}$ in cases when $p=6, q= r= 2$ or $p=q=r=2$.

\section{\bf Main results}
The following are the main theorems of this paper.

\begin{theorem}\label{th1}
The Bergman kernel for $D_1= \{(z_1, z_2, z_3) \in \mathbb{C}^3 : |z_1|^2 + |z_2|^2 <1 , \quad  |z_1|^2 + |z_3|^2 < 1\}$ is given by
\begin{align*}
K_{D_1}((z_1,z_2,z_3),(w_1,w_2,w_3))= \frac{3 -6\nu_1 + 3 \nu_1^2 + \nu_1(\nu_2 + \nu_3) - \nu_2 - \nu_3 -\nu_2 \nu_3}{\pi^3 (1- \nu_1- \nu_2 )^3 (1- \nu_1 - \nu_3)^3},
\end{align*}

where $\nu_i= z_i \overline{w_i}$ for $i=1,2,3$.
\end{theorem}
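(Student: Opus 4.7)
The plan is to exploit the Reinhardt structure of $D_1$ and use the monomials as an orthogonal basis. Because $D_1$ is invariant under the standard torus action, the monomials $z^{\alpha}=z_1^{\alpha_1}z_2^{\alpha_2}z_3^{\alpha_3}$, $\alpha\in(\mathbb{Z}_+)^3$, form an orthogonal basis of $L^2_a(D_1)$, and the kernel is given by
\begin{align*}
K_{D_1}(z,w)=\sum_{\alpha\in(\mathbb{Z}_+)^3}\frac{z^{\alpha}\overline{w}^{\alpha}}{\|z^{\alpha}\|^2}.
\end{align*}
So the first task is to compute $\|z^{\alpha}\|^2$. Passing to polar coordinates and setting $s_i=r_i^2$ reduces the integral to one over $\{s\in\mathbb{R}_+^3: s_1+s_2<1,\;s_1+s_3<1\}$; integrating first in $s_2$ and $s_3$ produces two factors $(1-s_1)^{\alpha_2+1}/(\alpha_2+1)$ and $(1-s_1)^{\alpha_3+1}/(\alpha_3+1)$, after which the remaining $s_1$-integral is a Beta integral. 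The outcome is the clean formula
\begin{align*}
\|z^{\alpha}\|^2=\frac{\pi^{3}\,\alpha_1!\,(\alpha_2+\alpha_3+2)!}{(\alpha_2+1)(\alpha_3+1)(\alpha_1+\alpha_2+\alpha_3+3)!}.
\end{align*}

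Next I substitute this into the kernel series and, writing $\nu_1=a,\nu_2=b,\nu_3=c$, sum in $\alpha_1$ first using the identity $\sum_{j\ge0}\binom{j+m+3}{j}a^{j}=(1-a)^{-(m+4)}$ with $m=\alpha_2+\alpha_3$. This collapses the triple sum to a double sum of the form
\begin{align*}
K_{D_1}=\frac{1}{\pi^3(1-a)^4}\sum_{k,l\ge0}(k+1)(l+1)(k+l+3)\,x^{k}y^{l},
\end{align*}
where $x=b/(1-a)$ and $y=c/(1-a)$. This double sum is now elementary: splitting $(k+l+3)=k+l+3$ and recognising $\sum(k+1)x^{k}=(1-x)^{-2}$, $\sum k(k+1)x^{k}=2x(1-x)^{-3}$, I express the sum as a rational function of $x,y$.

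The last step is the algebraic repackaging. I combine the three pieces over the common denominator $(1-x)^3(1-y)^3$, and after cancellation the numerator simplifies to $3-x-y-xy$. Substituting $1-x=(1-a-b)/(1-a)$ and $1-y=(1-a-c)/(1-a)$ clears the $(1-a)$ factors, leaving exactly the numerator $3-6\nu_1+3\nu_1^2+\nu_1(\nu_2+\nu_3)-\nu_2-\nu_3-\nu_2\nu_3$ over $\pi^3(1-\nu_1-\nu_2)^3(1-\nu_1-\nu_3)^3$.

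The conceptual steps are all standard; the only step requiring care is the final algebraic simplification, where the three rational terms arising from $k$, $l$ and the constant $3$ must cancel to yield a numerator of the stated compact form. I expect this to be the main bookkeeping obstacle, but it is purely mechanical once the generating-function identities are in place.
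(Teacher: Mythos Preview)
Your proposal is correct and follows essentially the same route as the paper: compute the monomial norms via polar coordinates and a Beta integral (this is the paper's Proposition~\ref{pr1}), sum first in $\alpha_1$ to pull out $(1-\nu_1)^{-4}$, reduce to the double sum $\sum_{k,l}(k+1)(l+1)(k+l+3)x^k y^l$, and evaluate it as $(3-x-y-xy)/((1-x)^3(1-y)^3)$ before substituting back. The only difference is cosmetic: the paper simply quotes the closed form of the double sum, whereas you indicate how to derive it by splitting $k+l+3$.
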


\begin{theorem}
The Bergman kernel for $D_2= \{(z_1, z_2, z_3) \in \mathbb{C}^3 : |z_1|^6 + |z_2|^2 <1 , \quad  |z_1|^6 + |z_3|^2 < 1\}$ is given by
\begin{align*}
K_{D_2}&((z_1,z_2,z_3),(w_1,w_2,w_3))=\\  &\frac{3}{2 \pi^3 } \frac{\partial^2}{\partial \nu_2 \partial \nu_3} \left\{ \frac{ \nu_1 + 2}{2(1- \nu_1^3- \nu_3)^2} \right.  + \frac{\nu_2 \nu_3 (V_1(\nu_1,\nu_2,\nu_3) + \nu_1 V_2(\nu_1,\nu_2,\nu_3)) }{\left(1- \nu_1^3\right)^2 \left(1- \nu_1^3- \nu_2 \right)^2 \left(1- \nu_1^3- \nu_3 \right)^2} \\ &+  \frac{\frac{2 \nu_3 }{\sqrt[3]{1- \nu_2}} -\frac{2 \nu_2}{\sqrt[3]{1-\nu_3}} + (2+ \nu_1)( \nu_2 - \nu_3)   - \frac{\nu_2 \nu_1 }{(1- \nu_3)^{2/3}} + \frac{\nu_3 \nu_1}{(1-\nu_2)^{2/3}} }{3 \left(1- \nu_1^3- \nu_3\right) (\nu_3- \nu_2)}  \\ &+ \frac{2 \nu_2  (1- \nu_3)^{2/3} -2 \nu_3 (1- \nu_2)^{2/3} - \nu_3 \sqrt[3]{1- \nu_2} + \nu_2 \sqrt[3]{1- \nu_3} }{2  \left(1 - \nu_1^3 - \nu_3 \right)^2 (\nu_3 - \nu_2)}\\ &+ \frac{\nu_3  \left(1 + \nu_1 - \nu_1 \sqrt[3]{1- \nu_2 } - (1- \nu_2)^{2/3} \right) \left(2 \nu_1^3 + \nu_2 + \nu_3 - 2\right)}{  \left(1 - \nu_1^3 -  \nu_2 \right)^2 \left(1- \nu_1^3 - \nu_3 \right)^2} \\ &+ \left. \frac{ \nu_3 ( (3 \nu_1 + 2) (1- \nu_2) -2(1-\nu_2)^{2/3}  -3 \nu_1 \sqrt[3]{1-\nu_2})}{3 (1-\nu_2)  \left(1- \nu_1^3- \nu_2\right) \left(1 -\nu_1^3- \nu_3\right)} \right\}  ,
\end{align*}

where $\nu_i= z_i \overline{w_i}$ for $i=1,2,3$, and
\begin{align*}
V_1(\nu_1,\nu_2,\nu_3)=&6 \nu_1^9 + \nu_1^6 (6 \nu_2 +6 \nu_3 -7)+ \nu_1^3 (\nu_2 (6 \nu_3 -2)-2 (\nu_3 +2 )) \\ &+  \nu_2 (3 \nu_3 - 4)-4 \nu_3 + 5  + \nu_1(\nu_2 (6 \nu_3 -7)-7 \nu_3 +8)\\ V_2(\nu_1,\nu_2,\nu_3)=&3 \nu_1^9+ \nu_1^6 (3 \nu_2 +3 \nu_3+ 2)+ \nu_1^3 (\nu_2 (3 \nu_3 +4)+4 \nu_3 - 13) \\&+ 3 \nu_1^7+2 \nu_1^4 ( \nu_2 + \nu_3 -3)+ \nu_1 \nu_2 (\nu_3 -2)-2 \nu_1 \nu_3 + 3\nu_1 \end{align*}
\end{theorem}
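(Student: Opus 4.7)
The plan is to expand $K_{D_2}$ in the monomial orthonormal basis $\{z^\alpha/\|z^\alpha\|\}_{\alpha\in(\mathbb{Z}_+)^3}$, which is available since $D_2$ is a complete Reinhardt domain. First I pass to polar coordinates $z_j=r_je^{i\theta_j}$ and integrate out the angular variables. For fixed $r_1<1$ the $r_2$- and $r_3$-integrals decouple and each gives an elementary factor $(1-r_1^6)^{\alpha_j+1}/[2(\alpha_j+1)]$; the remaining $r_1$-integral becomes a Beta function after the substitution $u=r_1^6$, yielding
\begin{align*}
\|z^\alpha\|^2=\frac{\pi^3}{3(\alpha_2+1)(\alpha_3+1)}\cdot\frac{\Gamma\!\left(\frac{\alpha_1+1}{3}\right)(\alpha_2+\alpha_3+2)!}{\Gamma\!\left(\frac{\alpha_1+1}{3}+\alpha_2+\alpha_3+3\right)}.
\end{align*}
This identifies the Bergman kernel as a triple series in $\nu_j=z_j\overline{w_j}$ with Pochhammer-type coefficients $(a)_{m+3}/(m+2)!$, where $a=(\alpha_1+1)/3$ and $m=\alpha_2+\alpha_3$.

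Next I factor out $\frac{3}{\pi^3}\,\partial^2/\partial\nu_2\,\partial\nu_3$ to absorb the $(\alpha_2+1)(\alpha_3+1)$ into $\nu_2^{\alpha_2+1}\nu_3^{\alpha_3+1}$; this reproduces the outer differential operator appearing in the statement and reduces the problem to computing
\begin{align*}
T(\nu_1,\nu_2,\nu_3)=\sum_{\alpha_1\ge 0}\nu_1^{\alpha_1}\sum_{\alpha_2,\alpha_3\ge 0}\frac{(a)_{m+3}}{(m+2)!}\,\nu_2^{\alpha_2+1}\nu_3^{\alpha_3+1}.
\end{align*}
For the inner double sum at fixed $\alpha_1$ I collect diagonals $\alpha_2+\alpha_3=m$ via $\sum_{\alpha_2+\alpha_3=m}\nu_2^{\alpha_2}\nu_3^{\alpha_3}=(\nu_2^{m+1}-\nu_3^{m+1})/(\nu_2-\nu_3)$, which reduces the task to a one-variable series. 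Recognizing that series as the tail of the second derivative of $(1-x)^{-a}$ gives the closed form
\begin{align*}
\sum_{m\ge 0}\frac{(a)_{m+3}}{(m+2)!}x^{m+1}=\frac{a}{x(1-x)^{a+1}}-\frac{a}{x}-a(a+1),
\end{align*}
and substituting back yields a compact expression for the inner sum in terms of $(1-\nu_j)^{-(a+1)}$ and $a$.

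The $\alpha_1$-summation is then a geometric series in disguise: after writing $(1-\nu_j)^{-(a+1)}=(1-\nu_j)^{-1}B_j^{-(\alpha_1+1)}$ with $B_j=(1-\nu_j)^{1/3}$, the identity $\sum_{\alpha_1\ge 0}(\alpha_1+1)x^{\alpha_1}=(1-x)^{-2}$ delivers denominators of the form $(B_j-\nu_1)^2$. The algebraic factorization
\begin{align*}
1-\nu_1^3-\nu_j=\bigl((1-\nu_j)^{1/3}-\nu_1\bigr)\bigl((1-\nu_j)^{2/3}+\nu_1(1-\nu_j)^{1/3}+\nu_1^2\bigr)
\end{align*}
then rewrites these denominators as $(1-\nu_1^3-\nu_j)^2$ times cube-root numerators, exactly the mixture of $(1-\nu_j)^{k/3}$ and $(1-\nu_1^3-\nu_j)^2$ appearing in the theorem.

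The principal obstacle is the final bookkeeping. The stated expression inside the braces has been split into roughly six summands whose individual denominators intertwine $(1-\nu_j)$, $(1-\nu_j)^{2/3}$, $(1-\nu_1^3-\nu_j)^2$ and $(\nu_3-\nu_2)$, and the polynomials $V_1,V_2$ are clearly tuned so that these pieces reassemble into one rational-plus-cube-root function. I would exploit the fact that $T$ is only determined up to an element of the kernel of $\partial^2/\partial\nu_2\,\partial\nu_3$, i.e.\ up to a sum of a function of $(\nu_1,\nu_2)$ and a function of $(\nu_1,\nu_3)$—this is precisely what accounts for the $\nu_2$-independent leading term $(\nu_1+2)/[2(1-\nu_1^3-\nu_3)^2]$ in the statement. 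Regrouping my compact form for $T$ into the author's six pieces is then a large but elementary rational manipulation, and in case of doubt one can verify the match by comparing the two power series in $\nu_1,\nu_2,\nu_3$ on an open neighbourhood of the origin.
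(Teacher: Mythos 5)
Your proposal is sound, but it follows a genuinely different route from the paper at the crucial step. The paper also starts from the monomial basis, the norm computation, and the extraction of $\frac{\partial^2}{\partial\nu_2\partial\nu_3}$; but it then splits the $\alpha_1$-sum according to $\alpha_1 \bmod 3$ and recognizes the three resulting triple series as Lauricella functions $C_j\,\nu_1^{j-1}F_8\!\left(3+\tfrac{j}{3},1,1,1;\tfrac{j}{3},3;\nu_1^3,\nu_2,\nu_3\right)$, which it evaluates via a recursion formula in the parameter $a$ (Proposition 3.1) that reduces $F_8$ to $F_1$ and $F_2$, and ultimately to ${}_2F_1$'s with known closed forms (Lemma 3.1, including separate limiting statements for $y=z$ and $y=0$ or $z=0$). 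You instead sum the diagonal $\alpha_2+\alpha_3=m$ first, evaluate the resulting one-variable series $\sum_m \frac{(a)_{m+3}}{(m+2)!}x^{m+1}$ in closed form (your formula is correct), and then perform the $\alpha_1$-sum as a geometric-type series in $\nu_1/(1-\nu_j)^{1/3}$, using $1-\nu_1^3-\nu_j=\bigl((1-\nu_j)^{1/3}-\nu_1\bigr)\bigl((1-\nu_j)^{2/3}+\nu_1(1-\nu_j)^{1/3}+\nu_1^2\bigr)$ to recover the denominators of the theorem. Your route is more elementary, bypasses the hypergeometric machinery entirely, and in fact yields a much more compact antiderivative than the six-term expression in the statement; what it does not buy you is the reusable $F_8$ recursion, which the paper also exploits for its other results. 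Your observation that the antiderivative is determined only modulo $\ker\frac{\partial^2}{\partial\nu_2\partial\nu_3}$ correctly explains the $\nu_2$-independent summand in the statement, and the remaining matching is indeed routine (though you should say explicitly how you treat the removable singularities at $\nu_2=\nu_3$ and $\nu_2\nu_3=0$, as the paper does, and note that $|\nu_1|^3<|1-\nu_j|$ on $D_2\times D_2$ so your geometric series converges).

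One substantive discrepancy: your norm $\|z^\alpha\|^2$ is half of the paper's Proposition 4.2. A check against $\mathrm{vol}(D_2)=2\pi^3\int_0^1 r(1-r^6)^2\,dr=\tfrac{9\pi^3}{14}$ shows that your value is the correct one (the paper's auxiliary identity $\int_0^1x^a(1-x^6)^b\,dx$ should carry a $6$, not a $3$, in the denominator). Consequently, carried to completion your argument produces the displayed kernel with prefactor $\tfrac{3}{\pi^3}$ rather than $\tfrac{3}{2\pi^3}$; you should flag this rather than silently conform to the stated constant, since e.g.\ $K_{D_2}(0,0)$ must equal $1/\mathrm{vol}(D_2)=\tfrac{14}{9\pi^3}$.
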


\begin{theorem}
The Bergman kernel for $D_3= \{(z_1, z_2, z_3) \in \mathbb{C}^3 : |z_1|^2 + |z_2|^2 <1 , \quad  |z_1|^4 + |z_3|^2 < |z_1|^2 \}$ is given by
$$K_{D_3}(z,w)=\frac{\nu_1 (\nu_3 (\nu_1-\nu_2-1)+(\nu_1-1) \nu_1 (3 \nu_1+\nu_3-3))}{ \pi^3(\nu_1+\nu_2-1)^3 ((\nu_1-1) \nu_1+\nu_3)^3},$$
where $z=(z_1,z_2,z_3)$, $w=(w_1,w_2,w_3)$ and $ $$\nu_i= z_i \overline{w_i}$ for $i=1,2,3$.
\end{theorem}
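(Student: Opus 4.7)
The plan is to reduce Theorem~3 to Theorem~\ref{th1}: the domain $D_3$ is biholomorphic to $D_1$ outside a thin set, so the Bergman-kernel transformation rule will produce $K_{D_3}$ directly from the already-known $K_{D_1}$, with no hypergeometric summation needed.

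First I would introduce the map $\Phi\colon D_3\to D_1$, $\Phi(z_1,z_2,z_3)=(z_1,z_2,z_3/z_1)$. This is well defined because the inequality $|z_1|^4+|z_3|^2<|z_1|^2$ forces $z_1\neq 0$, and a direct check shows that $\Phi$ maps $D_3$ biholomorphically onto $D_1\setminus\{z_1=0\}$, with inverse $(w_1,w_2,w_3)\mapsto(w_1,w_2,w_1w_3)$ and complex Jacobian $\det\Phi'(z)=1/z_1$. The removed hypersurface does not affect the Bergman kernel: for $f\in L^2_a(D_1\setminus\{z_1=0\})$, the Laurent coefficients $a_n(z_2,z_3)$ in the expansion $f=\sum_n a_n(z_2,z_3)z_1^n$ must vanish for every $n<0$, because otherwise the radial integral $\int_0^R r^{2n+1}\,dr$ is already divergent at $r=0$. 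Hence $L^2_a(D_1)=L^2_a(D_1\setminus\{z_1=0\})$, and the two Bergman kernels agree.

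The transformation rule then gives
\begin{equation*}
K_{D_3}(z,w) \;=\; \frac{1}{z_1\,\overline{w_1}}\,K_{D_1}\bigl(\Phi(z),\Phi(w)\bigr),
\end{equation*}
and $\Phi$ sends the pairing variables $(\nu_1,\nu_2,\nu_3)$ to $(\nu_1,\nu_2,\nu_3/\nu_1)$. Substituting the expression from Theorem~\ref{th1} and multiplying numerator and denominator by $\nu_1^3$ to clear the ratio $\nu_3/\nu_1$ produces a rational function in $(\nu_1,\nu_2,\nu_3)$ whose denominator is $\pi^3(1-\nu_1-\nu_2)^3\bigl(\nu_1(1-\nu_1)-\nu_3\bigr)^3$, agreeing with the one in the statement up to the sign identifications $(1-\nu_1-\nu_2)^3=-(\nu_1+\nu_2-1)^3$ and $\bigl(\nu_1(1-\nu_1)-\nu_3\bigr)^3=-\bigl((\nu_1-1)\nu_1+\nu_3\bigr)^3$. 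The numerator is $\nu_1$ times a polynomial of low degree, which I would then regroup to match the bracketed combination displayed in the theorem.

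The only real obstacle is this final algebraic rearrangement. It is mechanical rather than conceptual, and no new analysis is required beyond what was already used to prove Theorem~\ref{th1}.
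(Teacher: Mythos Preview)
Your approach is correct and is genuinely different from the paper's. The paper does not invoke the transformation rule at all: it computes the $L^2$-norms of the monomial basis on $D_3$ from scratch (Proposition~\ref{pr3}), writes the kernel as a triple sum, and then collapses the sum one index at a time. Your route is more conceptual and economical: once Theorem~\ref{th1} is in hand, the biholomorphism $\Phi(z)=(z_1,z_2,z_3/z_1)$ together with the removable-singularity fact $L^2_a(D_1\setminus\{z_1=0\})=L^2_a(D_1)$ yields $K_{D_3}(z,w)=\nu_1^{-1}K_{D_1}(\nu_1,\nu_2,\nu_3/\nu_1)$ with no further summation. It is worth noting that the paper \emph{does} record this biholomorphism, but only later, in the proof of the Lu~Qi-Keng lemma, not for the kernel computation itself; and that the paper's own starting series for $K_{D_3}$ (after the reindexing $\alpha_1\mapsto\alpha_1+\alpha_3+1$) is literally $\nu_1^{-1}$ times the $D_1$ series with $\nu_3$ replaced by $\nu_3/\nu_1$, so the two methods meet at the level of coefficients.

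One caution about your ``mechanical'' final step: substituting into Theorem~\ref{th1} and clearing denominators gives the numerator $\nu_1\bigl(\nu_3(\nu_1-\nu_2-1)+(\nu_1-1)\nu_1(3\nu_1+\nu_2-3)\bigr)$, which agrees with the paper's own intermediate displayed expression (after the $\alpha_2$-sum) but differs from the formula printed in the theorem by a single symbol: the theorem's $3\nu_1+\nu_3-3$ should read $3\nu_1+\nu_2-3$. So when you carry out the regrouping, do not be alarmed that it does not match the statement verbatim; the discrepancy is a typographical error in the displayed result, not a flaw in your argument.
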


\begin{theorem}
The Bergman kernel for domain $D_4$ defined $$\left\{z \in \mathbb{C}^4 : |z_1|^2 + |z_2|^2 + |z_3|^2<1 , \,  \left(|z_1|^2 + |z_2|^2\right)^2 + |z_4|^2 < |z_1|^2 + |z_2|^2 \right\}$$ is given by
\begin{align*}
K_{D_4}&((z_1,z_2,z_3,z_4),(w_1,w_2,w_3,w_4)) = \frac{1}{ \pi^4 } \frac{\partial^2}{\partial \nu_3 \partial \nu_4} \left\{ \frac{\nu_3}{\nu_3-\nu_3^2 -\nu_4} \right. \\ &\left(\frac{ 1+\nu_1+\nu_2-2\nu_3}{  (1-\nu_1-\nu_2)^3} \right. +  \frac{\nu_4 \nu_3(\nu_4-\nu_3)}{(\nu_3-\nu_4-1) (1-\nu_1-\nu_2)^2}    \\ &+ \frac{ \nu_4 \nu_3 (2-2 \nu_1-2 \nu_2- \nu_3)}{(\nu_3-\nu_4-1) (1- \nu_1 -\nu_2)^2 (1 - \nu_1 -\nu_2 - \nu_3)^2}   \\ &+ \frac{  \nu_4 \left(6 \nu_3 (\nu_1+\nu_2-1)+6 (\nu_1+\nu_2-1)^2+2 \nu_3^2\right) }{ (\nu_1+\nu_2-1)^3 (\nu_1+\nu_2+\nu_3-1)^3}   \\&+ \frac{8 \left(W_1(\nu_1,\nu_2,\nu_3,\nu_4) + \sqrt{1-4 \nu_4} W_2(\nu_1,\nu_2,\nu_3,\nu_4) \right) }{(1-4 \nu_4)^{3/2} \nu_4  \left(\sqrt{1-4 \nu_4}-2 \nu_1 + 1\right) \left(\sqrt{1-4 \nu_4}-2 \nu_1-2 \nu_2+1\right)^3} \\   &+  \left. \left. \frac{ \nu_4 \nu_3 \left(\nu_3 - \nu_4 \right)   \left(\sqrt{1-4 \nu_4}+1\right)^2  + 4 \nu_4 }{ (\nu_3-\nu_4-1) \sqrt{1-4 \nu_4} \left(\sqrt{1-4 \nu_4}-2 \nu_1-2 \nu_2+1\right)^2} \right) \right\},
  \end{align*}
where $\nu_i= z_i \overline{w_i}$ for $i=1,2,3,4$,  and \\ $W_1(\nu_1,\nu_2,\nu_3,\nu_4)=\nu_4^2 (8 \nu_1+8 \nu_2-4 \nu_3+2) -(\nu_1-1) (\nu_1+\nu_2+2 \nu_3+1) \\+\nu_4 \left(\nu_1^2 (4 \nu_3+6)+\nu_1 (4 \nu_2 \nu_3+6 \nu_2+4 \nu_3-4)-2 \nu_2 \nu_3-7 \nu_2-8 \nu_3-4\right)$, and \\ $W_2(\nu_1,\nu_2,\nu_3,\nu_4)=w (4 \nu_1 (\nu_1+\nu_2-1)-2 \nu_2 \nu_3-5 \nu_2-4 \nu_3-2)\\-(\nu_1-1) (\nu_1+\nu_2+2 \nu_3+1)$.
\end{theorem}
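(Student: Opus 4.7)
The plan is to follow the Reinhardt-domain strategy used for $D_1$, $D_2$, $D_3$ above: derive the monomial $L^2$-norms explicitly and then sum the resulting Bergman series in closed form. Since $D_4$ is a bounded complete Reinhardt domain, the monomials $\{z^\alpha : \alpha \in (\mathbb{Z}_+)^4\}$ form an orthogonal basis of $L^2_a(D_4)$. Passing to polar coordinates and substituting $s_j=|z_j|^2$ one has
\begin{align*}
\|z^\alpha\|^2 = \pi^4 \int_E s_1^{\alpha_1} s_2^{\alpha_2} s_3^{\alpha_3} s_4^{\alpha_4} \, ds_1\,ds_2\,ds_3\,ds_4,
\end{align*}
with $E=\{s_j>0 : s_1+s_2+s_3<1,\ s_4<(s_1+s_2)(1-s_1-s_2)\}$.

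The $s_4$- and $s_3$-integrations are elementary; in the remaining $(s_1,s_2)$-integral the Dirichlet substitution $s_1=tu$, $s_2=t(1-u)$ separates the variables and two beta evaluations yield
\begin{align*}
\|z^\alpha\|^2 = \frac{\pi^4 \, \alpha_1! \, \alpha_2! \, (\alpha_1+\alpha_2+\alpha_4+2)! \, (\alpha_3+\alpha_4+2)!}{(\alpha_1+\alpha_2+1)! \, (\alpha_3+1) \, (\alpha_4+1) \, (\alpha_1+\alpha_2+\alpha_3+2\alpha_4+5)!}.
\end{align*}

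Inserting this into the Bergman series, I would collapse the $\alpha_1,\alpha_2$ indices via $\sum_{\alpha_1+\alpha_2=m} \nu_1^{\alpha_1}\nu_2^{\alpha_2}/(\alpha_1!\alpha_2!) = (\nu_1+\nu_2)^m/m!$, and pull the factor $(\alpha_3+1)(\alpha_4+1)\nu_3^{\alpha_3}\nu_4^{\alpha_4}$ outside as $\partial^2/(\partial\nu_3\,\partial\nu_4)$ applied to $\nu_3^{\alpha_3+1}\nu_4^{\alpha_4+1}$. Setting $\mu=\nu_1+\nu_2$, $j=\alpha_3$, $k=\alpha_4$, this reduces the problem to the closed-form evaluation of
\begin{align*}
S(\mu,\nu_3,\nu_4)=\sum_{m,j,k\geq 0} \frac{(m+1)\,(m+j+2k+5)!}{(m+k+2)!\,(j+k+2)!}\,\mu^m\,\nu_3^{j+1}\,\nu_4^{k+1}.
\end{align*}
I would sum in the order $m$, $j$, $k$. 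The $m$-sum is rational in $\mu$ since $(m+a)!/(m+b)!$ is a polynomial in $m$, and produces the $(1-\nu_1-\nu_2)$-denominators. The $j$-sum is a Gauss hypergeometric of the form ${}_2F_1(\cdot,1;\cdot;\nu_3)$; its Euler transform (the first upper parameter becomes a negative integer) expresses it as a polynomial over a power of $(1-\nu_3)$, which combines with the $m$-output to yield the $(1-\nu_1-\nu_2-\nu_3)$-denominators. Finally, the $k$-sum has central-binomial-type coefficients due to the $2k$ inside the factorial; the substitution $\nu_4=y(1-y)$, equivalently $y=(1-\sqrt{1-4\nu_4})/2$, linearises it via $\sum_k\binom{2k}{k}x^k=(1-4x)^{-1/2}$ and its shifted and differentiated variants. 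This accounts for the $\sqrt{1-4\nu_4}$ factors and for the denominators $(\sqrt{1-4\nu_4}-2\nu_1+1)$ and $(\sqrt{1-4\nu_4}-2\nu_1-2\nu_2+1)$, which equal $2(1-y-\nu_1)$ and $2(1-y-\mu)$.

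The main obstacle is organising the partial sums produced by the three consecutive summations into the six displayed summands, via partial fractions in $1-\mu$, $1-\mu-\nu_3$, and in $y$. Each variable contributes its own pole contributions---the $(1-\mu)$-pole of order up to $3$, the $(1-\mu-\nu_3)$-pole of order up to $3$, and the $y$-pole---with the polynomials $W_1$ and $W_2$ in the statement collecting the numerator data after the square-root change of variable, exactly as the polynomials $V_1, V_2$ do for $D_2$ in the preceding theorem. Once $S$ is in closed form, substituting $\mu=\nu_1+\nu_2$ and differentiating with respect to $\nu_3$ and $\nu_4$ produces the stated formula for $K_{D_4}$.
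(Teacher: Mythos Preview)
Your overall outline---monomial orthogonal basis, the norm formula (which coincides with the paper's Proposition~\ref{pr4}), and pulling out $\partial^2/\partial\nu_3\partial\nu_4$---is exactly the paper's set-up. One small correction: $D_4$ is \emph{not} complete Reinhardt (shrinking $z_1,z_2$ to $0$ while keeping $z_4\neq 0$ exits the domain); the monomial basis is still correct because $D_4$ meets every coordinate hyperplane, which is the hypothesis actually needed. A sharper observation is that your collapse $\sum_{\alpha_1+\alpha_2=m}\nu_1^{\alpha_1}\nu_2^{\alpha_2}/(\alpha_1!\alpha_2!)=(\nu_1+\nu_2)^m/m!$ is valid and shows $K_{D_4}$ depends only on $\nu_1+\nu_2$; the paper does \emph{not} make this reduction, and the displayed formula carries asymmetric factors such as $(\sqrt{1-4\nu_4}-2\nu_1+1)$ and asymmetric $W_1,W_2$. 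So even if your route succeeds, it produces an equivalent expression in $\mu=\nu_1+\nu_2$, not literally the one stated.

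The substantive divergence is in how the series is summed. The paper does not sum in an order like $m,j,k$; instead it proves Lemma~\ref{lemd3} by repeatedly applying the contiguous relation $\Gamma(a+1)=a\Gamma(a)$ to split the four-index sum as $S=S_1+S_2+S_3$, obtains for each piece a linear relation (for example $(1-w/z-z)S_1=\cdots$) that eliminates one index, and then sums the $\alpha_4$-index \emph{first} via the quadratic-transformation identity $F\bigl(a,a+\tfrac12;2a;4w\bigr)=2^{2a-1}(1-4w)^{-1/2}\bigl(1+\sqrt{1-4w}\bigr)^{1-2a}$, summing over $\alpha_1,\alpha_2$ only at the end. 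Your order has a difficulty you do not confront: summing $m$ first gives a rational function whose pole at $\mu=1$ has order $j+k+5$, so the subsequent ``$j$-sum'' is not a single ${}_2F_1(\cdot,1;\cdot;\nu_3)$ with fixed parameters, and the Euler transform then yields polynomials whose degree grows with $k$, so the final $k$-sum is not a bare central-binomial series either. The naive $k$-sum (before any reduction) has \emph{two} lower Pochhammer factors, $(m+3)_k$ and $(j+3)_k$, which is precisely why it is not of the special $F(a,a+\tfrac12;2a;\cdot)$ shape; the paper's contiguous-relation decomposition is exactly the device that strips one of them away. Your identification of the $\sqrt{1-4\nu_4}$ mechanism is right, but without that reduction (or an equivalent trick) the plan as written does not close.
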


\section{\bf  Explicit formulas of hypergeometric functions}

A great interest in the theory of hypergeometric functions (that is, hypergeometric functions of several
variables) is motivated essentially by the fact that the solutions of many applied problems involving
(for example) partial differential equations are obtainable with the help of such hypergeometric function
(see, for details, (\cite{Multiple}, p. 47); see also other works (\cite{H1, H2}, \cite{OSS}) and the references cited therein). For instance, the energy absorbed by some non-ferromagnetic conductor sphere included in an internal
magnetic field can be calculated with the help of such functions \cite{LO}, \cite{NI}. Hypergeometric functions of
several variables are used in physical and quantum chemical applications as well \cite{MA}, \cite{SE}. Especially,
many problems in gas dynamics lead to solutions of degenerate second-order partial differential
equations, which are then solvable in terms of multiple hypergeometric functions. Among examples, we
can cite the problem of adiabatic flat-parallel gas flow without whirlwind, the flow problem of supersonic
current from vessel with flat walls, and a number of other problems connected with gas flow \cite{Bers}, \cite{Fran}. Multiple hypergeometric functions (that is, hypergeometric functions in several variables) occur naturally
in a wide variety of problems. In particular, one of the  Lauricella functions

\begin{align*}
F_8(a,b_1,b_2,b_3;c_1,c_2;x,y,z)= \sum_{m,n,p=0}^{\infty} \frac{(a)_{m+n+p} (b_1)_m (b_2)_n (b_3)_p }{(c_1)_m (c_2)_{n+p} m! n! p!} x^m y^n z^p,
\end{align*}

Appell's functions $F$, $F_1$ and $F_2$ defined by

\begin{align*}
F(a,b;c;x)= \sum_{m=0}^{\infty} \frac{(a)_{m} (b)_m   }{ (c)_{m} m! } x^m ,
\end{align*}

\begin{align*}
F_1(a,b_1,b_2;c;x,y)= \sum_{m,n=0}^{\infty} \frac{(a)_{m+n} (b_1)_m (b_2)_n  }{ (c)_{m+n} m! n!} x^m y^n,
\end{align*}

\begin{align*}
F_2(a,b_1,b_2;c_1,c_2;x,y)= \sum_{m,n=0}^{\infty} \frac{(a)_{m+n} (b_1)_m (b_2)_n  }{ (c_1)_{m} (c_2)_n m! n!} x^m y^n.
\end{align*}

and Horn's function $H_3$

\begin{align*}
H_3(a,b;c;x,y)= \sum_{m,n=0}^{\infty} \frac{(a)_{2m+n} (b)_n  }{ (c)_{m + n}  m! n!} x^m y^n.
\end{align*}

For function $F_1$ we have the following integral representation (see \cite{Edr})
\begin{align*}
F_1(a,b_1,b_2;c;x,y)= \frac{\Gamma(c)}{\Gamma(b_1) \Gamma(b_2) \Gamma(c- b_1 - b_2)} \\ \cdot \iint\limits_{\substack{u \geq 0 , v \geq 0 \\ u + v \leq 1   } } u^{b_1 -1} v^{b_2 - 1} (1-u-v)^{c-b_1-b_2-1} (1-ux-vy)^{-a} \,dudv ,
\end{align*}
where $\Re (b_1) > 0, \Re (b_2) > 0, \Re(c-b_1-b_2) > 0$. \newline
Picard has pointed out that $F_1$ can be represented by a single integral in the form

\begin{align*}
F_1(a,b_1,b_2;c;x,y)=& \frac{\Gamma(c)}{\Gamma(a)  \Gamma(c- a)} \\ &\cdot \int_0^1 u^{a-1} (1-u)^{c-a-1} (1-u x)^{-b_1} (1- u y)^{-b_2}\, du,
\end{align*}
where  $\Re (a) > 0, \Re(c-a) > 0$.

In \cite{decH3} presented certain interesting integral representation for Horn's $H_3$ function

\begin{align*}
H_3(a,b;c;x,y)= \frac{\Gamma(c)}{\Gamma(b)  \Gamma(c- b)} & \int_0^1 u^{b-1} (1-u)^{c-b-1} (1-u y)^{-a} \\& \cdot F\left(\frac{a}{2}, \frac{a+1}{2};c-b ; \frac{4x(1-u)}{(1 - u y)^2} \right) \, du,
\end{align*}
where  $\Re (c)  > \Re(b) > 0$.

In this section we prove following recursion formula for  $$F_8(a,1;c_1,c_2;x,y,z):=F_8(a,1,1,1;c_1,c_2;x,y,z).$$

\begin{pr}
For any $a>1$ and $ |x| + |y| <1$, $|x| + |z|<1$, we have
\begin{align*}
F_8(a,1;c_1,c_2;x,y,z)=& \frac{a-c_1 - c_2 +1}{(a-1)(1-x-z)} F_8(a-1,1;c_1,c_2;x,y,z) \\ +& \frac{c_1-1}{a-1} \frac{1}{1-x-z} F_1(a-1,1,1;c_2;y,z)\\ +&  \frac{c_2-1}{a-1} \frac{1}{1-x-z} F_2(a-1,1,1;c_1,c_2-1;x,y)
\end{align*}

\end{pr}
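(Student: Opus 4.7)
The identity is a formal power-series identity, and my plan is to prove it by clearing denominators — multiplying both sides by $(a-1)(1-x-z)$ — and then matching the coefficient of $x^m y^n z^p$ on both sides. Because $b_1=b_2=b_3=1$, all factorials in the defining series cancel against Pochhammer symbols $(1)_k=k!$, so the coefficient of $x^m y^n z^p$ in $F_8(a,1;c_1,c_2;x,y,z)$ is simply $(a)_{m+n+p}/[(c_1)_m (c_2)_{n+p}]$, whereas the coefficients of $F_1(a-1,1,1;c_2;y,z)$ and $F_2(a-1,1,1;c_1,c_2-1;x,y)$ simplify to $(a-1)_{n+p}/(c_2)_{n+p}$ (with no $x$-dependence) and $(a-1)_{m+n}/[(c_1)_m (c_2-1)_n]$ (with no $z$-dependence), respectively.

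The key algebraic tool is the Pochhammer contiguous relation $(a-1)(a)_k=(a-1+k)(a-1)_k$, which converts the $(a-1)$-prefactor on the left into $(a-1)$-Pochhammer symbols matching those on the right. Setting $\beta_{m,n,p}:=(a-1)_{m+n+p}/[(c_1)_m (c_2)_{n+p}]$ and using the elementary ratios $(c_1)_m/(c_1)_{m-1}=c_1+m-1$ and $(c_2)_{n+p}/(c_2)_{n+p-1}=c_2+n+p-1$, the coefficient of $x^m y^n z^p$ in $(a-1)(1-x-z)F_8(a,1;c_1,c_2;x,y,z)$ equals $(a-1+m+n+p)\beta_{m,n,p}$, decreased by $(c_1+m-1)\beta_{m,n,p}$ when $m\ge 1$ and by $(c_2+n+p-1)\beta_{m,n,p}$ when $p\ge 1$. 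The verification then splits into four cases according to whether $m=0$ and whether $p=0$: when $m,p\ge 1$ the three terms above telescope to $(a-c_1-c_2+1)\beta_{m,n,p}$, matching the right-hand side since $F_1$ and $F_2$ contribute nothing; when $m=0$ the missing $-x$ contribution is restored by $(c_1-1)F_1(a-1,1,1;c_2;y,z)$ since its coefficient $(a-1)_{n+p}/(c_2)_{n+p}$ equals $\beta_{0,n,p}$; when $p=0$, the Pochhammer identity $(c_2-1)(c_2)_n/(c_2-1)_n=c_2+n-1$ shows that $(c_2-1)F_2(a-1,1,1;c_1,c_2-1;x,y)$ supplies exactly the missing $-z$ contribution; and the corner case $m=p=0$ is handled by combining both adjustments.

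The only real obstacle is the boundary bookkeeping: one has to verify carefully that the $F_1$ and $F_2$ summands on the right dovetail precisely with the terms that drop out of $(1-x-z)F_8(a,1;c_1,c_2;x,y,z)$ when $m=0$ or $p=0$, via the Pochhammer ratio identities above. Once the four cases are tabulated, the recursion holds coefficient by coefficient, and promotes to an analytic identity on the region $|x|+|y|<1$, $|x|+|z|<1$ where all four hypergeometric series converge absolutely.
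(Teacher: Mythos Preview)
Your proof is correct and follows essentially the same route as the paper: both arguments hinge on the Pochhammer contiguous relation $(a-1)(a)_k=(a-1+k)(a-1)_k$ together with the ratio identities $(c_1-1+m)/(c_1)_m=(c_1-1)/(c_1-1)_m$ and $(c_2-1+n+p)/(c_2)_{n+p}=(c_2-1)/(c_2-1)_{n+p}$, and both isolate the $F_1$ and $F_2$ pieces from the boundary cases $m=0$ and $p=0$. The only cosmetic difference is that the paper manipulates the series directly (writing $a-1+m+n+p=(a+1-c_1-c_2)+(c_1-1+m)+(c_2-1+n+p)$, then shifting indices to peel off $xF_8(a,\ldots)$ and $zF_8(a,\ldots)$), whereas you clear denominators first and match coefficients case by case; the underlying computation is identical.
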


\begin{proof}
Using well know fact $(1)_k=k!$ we have

\begin{align*}
F_8(a,1;c_1,c_2;x,y,z)=\sum_{m,n,p=0}^{\infty} \frac{(a)_{m+n+p}  }{(c_1)_m (c_2)_{n+p} } x^m y^n z^p
\end{align*}

Next from $(s)_k=\frac{(s-1+k)(s-1)_k}{s-1}$ we have

\begin{align*}
F_8(a,1;c_1,c_2;x,y,z)=\sum_{m,n,p=0}^{\infty} \frac{(a-1+m+n+p)(a-1)_{m+n+p}  }{(a-1)(c_1)_m (c_2)_{n+p} } x^m y^n z^p
\end{align*}
After little calculation we obtain

\begin{align*}
F_8(a,1;c_1,c_2;x,y,z)=& \frac{a+1-c_1-c_2}{a-1} F_8(a-1,1;c_1,c_2;x,y,z) \\&+  \frac{c_1-1}{a-1}  \sum_{m,n,p=0}^{\infty} \frac{(a-1)_{m+n+p}  }{(c_1-1)_m (c_2)_{n+p} } x^m y^n z^p \\&+  \frac{c_2-1}{a-1}  \sum_{m,n,p=0}^{\infty} \frac{(a-1)_{m+n+p}  }{(c_1)_m (c_2-1)_{n+p} } x^m y^n z^p
\end{align*}

Now summing the m and p respectively in the second and third lines, we have

\begin{align*}
&F_8(a,1;c_1,c_2;x,y,z)= \frac{a+1-c_1-c_2}{a-1} F_8(a-1,1;c_1,c_2;x,y,z) \\&+ \frac{c_1-1}{a-1} \left(\sum_{n,p=0}^{\infty} \frac{(a-1)_{n+p}  }{ (c_2)_{n+p} } y^n z^p + \sum_{m=1,n,p=0}^{\infty} \frac{(a-1)_{m+n+p}  }{(c_1-1)_m (c_2)_{n+p} } x^m y^n z^p \right)\\&+  \frac{c_2-1}{a-1} \left( \sum_{m,n=0}^{\infty} \frac{(a-1)_{m+n}  }{(c_1)_m (c_2-1)_{n} } x^m y^n + \sum_{m,n=0,p=1}^{\infty} \frac{(a-1)_{m+n+p}  }{(c_1)_m (c_2-1)_{n+p} } x^m y^n z^p \right)
\end{align*}

Reversing the summation indexes, we can write

\begin{align*}
F_8&(a,1;c_1,c_2;x,y,z)= \frac{a+1-c_1-c_2}{a-1} F_8(a-1,1;c_1,c_2;x,y,z) \\ & + \frac{c_1-1}{a-1} \sum_{n,p=0}^{\infty} \frac{(a-1)_{n+p}  }{ (c_2)_{n+p} } y^n z^p + x F_8(a,1;c_1,c_2;x,y,z) \\ &+ \frac{c_2-1}{a-1}  \sum_{m,n=0}^{\infty} \frac{(a-1)_{m+n}  }{(c_1)_m (c_2-1)_{n} } x^m y^n + z F_8(a,1;c_1,c_2;x,y,z)
\end{align*}
which complete proof.
\end{proof}

Now we calculate formulas in special cases.

\begin{lemma}\label{lem1}
For $ |x| + |y| <1$, $|x| + |z|<1$ and $y \neq 0$,  $z \neq 0$ $ z \neq y$, we have
\begin{align*}
F_8&\left( \frac{10}{3} ,1; \frac{1}{3} ,3;x^3,y,z \right)=  \frac{ -\frac{9y}{\sqrt[3]{1-z}}+ \frac{9z}{\sqrt[3]{1-y}}-9 (z-y)}{7 y z  (z-y)M_{zx}}  + \frac{9 \left(\sqrt[3]{1-y}-1\right)}{7 y \sqrt[3]{1-y}  M_{yx} M_{zx}} \\&+ \frac{9P(x,y,z)}{14 \left(1-x^3\right)^2 M_{yx}^2 M_{zx}^2} + \frac{27 \left((1-y)^{2/3}-1\right) (M_{yx} + M_{zx} )}{14 y M_{yx}^2 M_{zx}^2} \\& -\frac{27 \left(-z(1-y)^{2/3} + y (1-z)^{2/3}-y+z\right)}{14 y z  (y-z) M_{zx}^2} ,
\end{align*}

\begin{align*}
F_8&\left( \frac{11}{3} ,1; \frac{2}{3} ,3;x^3,y,z \right)=  \frac{ -\frac{9y}{(1-z)^{2/3}}+ \frac{9z}{(1-y)^{2/3}}-9(z-y)}{40 y z (z-y)M_{zx}}  \\ &+ \frac{9G(x,y,z)}{40 \left(1-x^3\right)^2 M_{yx}^2 M_{zx}^2}   + \frac{27 \left(\sqrt[3]{1-y}-1\right) (M_{yx} + M_{zx} ) }{40 y M_{yx}^2 M_{zx}^2}  \\&-  \frac{27 \left(-z\sqrt[3]{1-y} + y \sqrt[3]{1-z}-y+z\right)}{80 y z  (y-z)M_{zx}^2}  + \frac{27 \left((1-y)^{2/3}-1\right)}{40 y (1-y)^{2/3}  M_{yx} M_{zx}},
\end{align*}

\begin{align*}
F_8\left( 4 ,1; 1 ,3;x^3,y,z \right)=  \frac{3 x^6+2 x^3 (y+z-3)+yz-2y-2 z+3}{3 \left(x^3-1\right)^2 M_{yx}^2 M_{zx}^2},
\end{align*}
where $P(x,y,z)=2 x^9+x^6 (2 y+2 z+3)+2 x^3 (y z+y+z-6)+y z-4y-4 z+7$, \\ $G(x,y,z)=x^9+x^6 (y+z+6)+x^3 (y z+4y+4 z-15)+2 y  z-5y-5 z+8$ and  $M_{tx}=1-x^3-t$.

If $y=0$ then we have

\begin{align*}
F_8&\left( \frac{10}{3} ,1; \frac{1}{3} ,3;x^3,y,z \right)=F_2\left( \frac{10}{3} ,1,1; \frac{1}{3} ,3;x^3,z \right)= \\ &  \frac{x^3 \left(2 x^3 \left(18-x^3-z\right)+16 z-21\right)+4 z-13}{14 \left(x^3-1\right)^3 M_{zx}^2}\\&+ \frac{27 (1-z)^{2/3}+18 z-27}{14 z^2 M_{zx}^2}+\frac{3 z \sqrt[3]{1-z}+9 \sqrt[3]{1-z}-9}{7 z^2 \sqrt[3]{1-z}  M_{zx}},
\end{align*}

\begin{align*}
F_8&\left( \frac{11}{3} ,1; \frac{2}{3} ,3;x^3,y,z \right)= F_2\left( \frac{11}{3} ,1,1; \frac{2}{3} ,3;x^3,z \right)= \\&\frac{-6 x^9+x^6 (45-6 z)+x^3 (30 z+9)+6 (5 z-8)}{40 \left(x^3-1\right)^3 M_{zx}^2} \\&+\frac{27 \sqrt[3]{1-z}+9z-27}{40 z^2 M_{zx}^2} +  \frac{6 z (1-z)^{2/3}+9 (1-z)^{2/3}-9}{40z^2 (1-z)^{2/3}  M_{zx}},
\end{align*}
where as before $M_{tx}=1-x^3-t$.
In the case when $z=y=0$, we have

\begin{align*}
F_8\left( \frac{10}{3} ,1; \frac{1}{3} ,3;x^3,y,z \right)= F \left( \frac{10}{3} ,1; \frac{1}{3};x^3 \right)=  \frac{4 x^9-21 x^6+84 x^3+14}{14 \left(1-x^3\right)^4}
\end{align*}

\begin{align*}
F_8\left( \frac{11}{3} ,1; \frac{2}{3} ,3;x^3,y,z \right)= F \left( \frac{11}{3} ,1; \frac{2}{3};x^3 \right)=  \frac{5 x^9-24 x^6+60 x^3+40}{40 \left(1-x^3\right)^4}
\end{align*}

If $y=z$ and $z \neq 0$ then we have

\begin{align*}
F_8\left( \frac{10}{3} ,1; \frac{1}{3} ,3;x^3,z,z \right)= \lim_{y \rightarrow z} F_8\left( \frac{10}{3} ,1; \frac{1}{3} ,3;x^3,y,z \right)
\end{align*}

\begin{align*}
F_8\left( \frac{11}{3} ,1; \frac{2}{3} ,3;x^3,z,z \right)= \lim_{y \rightarrow z} F_8\left( \frac{11}{3} ,1; \frac{2}{3} ,3;x^3,y,z \right)
\end{align*}

\end{lemma}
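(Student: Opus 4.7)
The plan is to collapse the three-variable $F_8$ into a divided difference of the two-variable $F_2$, and then exploit that $c_2=3$ is an integer (making the inner ${}_2F_1$ elementary) and that $c_1-a=-3$ in every one of the three parameter triples of the lemma (making the outer ${}_2F_1$ elementary via Euler's transformation).

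First I would group the triple sum defining $F_8$ by $k=n+p$ and apply the geometric identity $\sum_{n+p=k}y^nz^p=(y^{k+1}-z^{k+1})/(y-z)$ for $y\ne z$ to obtain
\begin{align*}
F_8(a,1;c_1,c_2;x^3,y,z) = \frac{y\,F_2(a,1,1;c_1,c_2;x^3,y) - z\,F_2(a,1,1;c_1,c_2;x^3,z)}{y-z}.
\end{align*}
This reduces the task to a closed-form evaluation of $F_2(a,1,1;c_1,3;x^3,v)=\sum_m\frac{(a)_m}{(c_1)_m}x^{3m}\,{}_2F_1(a+m,1;3;v)$. The inner ${}_2F_1$ is elementary via Euler's integral,
\begin{align*}
{}_2F_1(A,1;3;v) = \frac{2\bigl[(1-A)-(2-A)(1-v)+(1-v)^{2-A}\bigr]}{v^2(1-A)(2-A)},
\end{align*}
and substituting $A=a+m$ splits the outer $m$-sum into three pieces: a $v$-independent one weighted by $\frac{1}{(3m+4)(3m+7)}$, a linear-in-$v$ one weighted by $\frac{1}{3m+7}$, and a third carrying $(1-v)^{2-a-m}=(1-v)^{2-a}(1-v)^{-m}$ whose $(1-v)^{-m}$ is absorbed into a new argument $x^3/(1-v)$.

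Next I would evaluate each outer sum. Since $c_1-a=-3$ in all three cases, Euler's transformation
\begin{align*}
{}_2F_1(a,1;c_1;w) = (1-w)^{c_1-a-1}\,{}_2F_1(c_1-a,c_1-1;c_1;w)
\end{align*}
turns $\sum_m\frac{(a)_m}{(c_1)_m}w^m$ into $(1-w)^{-4}$ times a cubic in $w$, which is elementary. The two auxiliary sums with $\frac{1}{3m+\kappa}$ ($\kappa\in\{4,7\}$) are brought into the same framework via $\frac{1}{3m+\kappa}=\frac{1}{3}\int_0^1 t^{\kappa/3-1+m}\,dt$ and an exchange of sum and integral, leaving only elementary integrals $\int_0^1 t^{\kappa/3-1}(1-wt)^{-4}P(wt)\,dt$ with $P$ cubic; the substitution $s=1-wt$ evaluates these and produces the cube roots $(1-y)^{1/3}$, $(1-y)^{2/3}$ (and their $z$-analogues) together with the factors $M_{yx}=1-x^3-y$ and $M_{zx}=1-x^3-z$ seen in the statement. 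The degenerate cases follow by specialisation: $y=0$ collapses the divided difference directly to $F_2(a,1,1;c_1,3;x^3,z)$, $y=z=0$ collapses it to ${}_2F_1(a,1;c_1;x^3)$, and $y=z$ is the limit of a smooth expression in $y$.

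The main obstacle is algebraic bookkeeping rather than any theoretical difficulty. Once the three pieces coming from the split of $F_2$ have been evaluated and assembled via the divided difference, the resulting expression must be regrouped into the particular form stated in the lemma, and the apparent singularities at $y=0$, $z=0$ and $y=z$ must be seen to cancel. This lengthy but routine simplification, and the careful tracking of which combinations of $(1-x^3)^{-2}$, $M_{yx}^{-2}$, $M_{zx}^{-2}$ multiply each cube-root piece, is what the proof mostly consists of.
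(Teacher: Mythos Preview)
Your approach is correct and takes a genuinely different route from the paper. The paper iterates its own recursion for $F_8$ (Proposition~3.1) twice, reducing $F_8\bigl(\tfrac{10}{3},1;\tfrac13,3;x^3,y,z\bigr)$ to a linear combination of $F_1\bigl(\tfrac43,1,1;3;y,z\bigr)$, $F_1\bigl(\tfrac73,1,1;3;y,z\bigr)$ and $F_2\bigl(\tfrac43,1,1;\tfrac13,2;x^3,y\bigr)$, $F_2\bigl(\tfrac73,1,1;\tfrac13,2;x^3,y\bigr)$; it then applies Park's recursion for $F_2$ and Picard's one-dimensional integral for $F_1$, finishing with the classical closed forms of ${}_2F_1(\tfrac13,1;2;\cdot)$, ${}_2F_1(\tfrac43,1;2;\cdot)$, ${}_2F_1(\tfrac43,1;\tfrac13;\cdot)$. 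Your divided-difference identity $F_8=\bigl(y\,F_2(\cdot;x^3,y)-z\,F_2(\cdot;x^3,z)\bigr)/(y-z)$ bypasses both the $F_8$ recursion and the Appell $F_1$ entirely, trading them for a single direct evaluation of $F_2$. Your route is more self-contained (no external recursion results needed), while the paper's route yields recursion tools that work beyond the three special parameter triples of the lemma.

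One step in your outline should be tightened. For the auxiliary sums $\sum_m\frac{(a)_m}{(c_1)_m}\frac{1}{3m+\kappa}\,w^m$ you invoke $\frac{1}{3m+\kappa}=\frac13\int_0^1 t^{m+\kappa/3-1}\,dt$ and then claim the substitution $s=1-wt$ finishes the integral; but that substitution leaves a factor $(1-s)^{\kappa/3-1}$ with non-integer exponent, and the individual monomials of the cubic $P$ yield incomplete Beta integrals which are \emph{not} separately elementary. The real reason these sums close is simpler and already implicit in your use of Euler's transformation: since $a-c_1=3$, the ratio $(a)_m/(c_1)_m$ equals $\dfrac{(c_1+m)(c_1+m+1)(c_1+m+2)}{c_1(c_1+1)(c_1+2)}$, a cubic in $m$ whose linear factors are precisely $m+\tfrac{\kappa}{3}$ for the two values of $\kappa$ that arise (e.g.\ $m+\tfrac43$ and $m+\tfrac73$ when $a=\tfrac{10}{3}$). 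Hence the quotient is a quadratic polynomial in $m$---indeed linear once both factors from $(1-A)(2-A)$ are cancelled---so the sum is a rational function of $w$ outright, with no integral needed. The cube roots $(1-v)^{1/3}$, $(1-v)^{2/3}$ in the final answer then come solely from the prefactor $(1-v)^{2-a}$ multiplying the third piece.
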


\begin{proof}
Using twice recursion formula for $F_8$ function, we have
\begin{align*}
F_8\left( \frac{10}{3} ,1; \frac{1}{3} ,3;x^3,y,z \right)=& \frac{-3F_1\left(\frac{4}{3}, 1, 1; 3; y,z \right)}{14(1- x^3 - z )^2 }  +  \frac{9F_2\left(\frac{4}{3}, 1, 1; \frac{1}{3},2; x^3,y \right)}{14(1- x^3 - z )^2 } \\ &+ \frac{-2F_1\left(\frac{7}{3}, 1, 1; 3; y,z \right)}{7(1- x^3 - z ) }  +  \frac{6F_2\left(\frac{7}{3}, 1, 1; \frac{1}{3},2; x^3,y \right)}{7(1- x^3 - z ) }
\end{align*}
Now using recursion formula for $F_2$ (see \cite{Park1} or more general version  \cite{Park2}), we obtain

\begin{align*}
F_8&\left( \frac{10}{3} ,1; \frac{1}{3} ,3;x^3,y,z \right)= \frac{-3F_1\left(\frac{4}{3}, 1, 1; 3; y,z \right)}{14(1- x^3 - z )^2 } + \frac{-2F_1\left(\frac{7}{3}, 1, 1; 3; y,z \right)}{7(1- x^3 - z ) } \\ &-\frac{18 F\left(\frac{1}{3},1; 2; y \right) }{14 \left(-x^3-y+1\right) \left(-x^3-z+1\right)^2} + \frac{27 F\left(\frac{1}{3},1; \frac{1}{3}; x^3 \right) }{14  \left(-x^3-y+1\right) \left(-x^3-z+1\right)^2} \\ &+ \frac{27 \, F\left(\frac{1}{3},1;\frac{1}{3};x^3\right)}{14 \left(-x^3-y+1\right)^2 \left(-x^3-z+1\right)}-\frac{9 \, F\left(\frac{1}{3},1;2;y\right)}{7 \left(-x^3-y+1\right)^2 \left(-x^3-z+1\right)}\\ &+ \frac{9 \, F\left(\frac{4}{3},1;\frac{1}{3};x^3\right)}{14 \left(-x^3-y+1\right) \left(-x^3-z+1\right)}-\frac{3 \, F\left(\frac{4}{3},1;2;y\right)}{7 \left(-x^3-y+1\right) \left(-x^3-z+1\right)}
\end{align*}

After some calculations using integral representation for $F_1$ function and  well known formulas $ F\left(\frac{1}{3},1;2;w\right) = \frac{3 \left(1-(1-w)^{2/3}\right)}{2 w} ,$  $ F\left(\frac{4}{3},1;2;w\right) = \frac{3 \left(1- \sqrt[3]{1-w}\right)}{\sqrt[3]{1-w} w}$ and \\ $F\left(\frac{4}{3},1;\frac{1}{3};w\right)  = \frac{2 w+1}{(1-w)^2}$  we obtain desired result. The proof for the other formulas in similar to above.

\end{proof}
To prove Bergman kernel formula for domain $D_4$ we need the following lemma.

It is possible that the following sum is represented by a hypergeometric function or is related to a combination of certain hypergeometric functions, but from the point of view of this work most interesting is its calculation.
\begin{lemma}\label{lemd3} For $|x| + |y| + |z| <1$, $|4w|<1$, $w\neq 0$, $w \neq z - z^2$ and $2|x| + 2|y| - |\sqrt{1-4w}|<1$, we have
  \begin{align*}
      &\sum_{m,n,k,l=0}^{\infty}  \frac{ \Gamma\left( m + n + 2  \right) \Gamma\left( m + n + k + 2l + 6  \right) }{ m! n! \Gamma\left( m + n + l + 3 \right) \Gamma\left( k + l + 3  \right) } x^m y^n z^k w^l = \\ &\frac{ z(2-2 x-2 y-z)}{(z-z^2 -w)(z-w-1) (1- x -y)^2 (1 - x -y - z)^2} \\ &+ \frac{2z}{w (z -z^2 - w)(x+y-1)^3}+  \frac{z(w-z)}{(z-z^2 - w)(z-w-1) (1-x-y)^2}  \\ &+  \frac{z \left(z-w \right)   \left(\sqrt{1-4 w}+1\right)^2  + 4}{ (z-z^2 - w)(z-w-1) \sqrt{1-4 w} \left(\sqrt{1-4 w}-2 x-2 y+1\right)^2}                \\ &+ \frac{6 z (x+y-1)+6 (x+y-1)^2+2 z^2}{(z-z^2 - w) (x+y-1)^3 (x+y+z-1)^3} + \frac{1+x+y}{w (z-z^2 - w) (1-x-y)^3} \\ &+ \frac{8 \left(W_1(x,y,z,w) + \sqrt{1-4 w} W_2(x,y,z,w) \right) }{(z-z^2 - w)(1-4 w)^{3/2} w  \left(\sqrt{1-4 w}-2 x+1\right) \left(\sqrt{1-4 w}-2 x-2 y+1\right)^3},
  \end{align*}
where $W_1(x,y,z,w)=w^2 (8 x+8 y-4 z+2)-(x-1) (x+y+2 z+1)\\+w \left(x^2 (4 z+6)+x (4 y z+6 y+4 z-4)-2 y z-7 y-8 z-4\right)$, and \\$W_2(x,y,z,w)=w (4 x (x+y-1)-2 y z-5 y-4 z-2)-(x-1) (x+y+2 z+1)$
\end{lemma}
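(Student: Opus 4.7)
The strategy is iterated summation, in the spirit of the reductive treatment of $F_8(a,1;c_1,c_2;x,y,z)$ in the preceding proposition and lemma: first collapse the $(m,n)$-sum, then carry out the $l$-sum, then the $k$-sum, finishing with the $N$-sum where $N=m+n$.

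First, the elementary identity $\sum_{m+n=N}\frac{(N+1)!}{m!\,n!}\,x^m y^n=(N+1)(x+y)^N$ collapses the $(m,n)$-double sum. Setting $X:=x+y$, the original quadruple series becomes
\begin{align*}
S=\sum_{N,k,l\ge 0}(N+1)\,\frac{(N+k+2l+5)!}{(N+l+2)!\,(k+l+2)!}\,X^N z^k w^l.
\end{align*}

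Second, fix $N,k$, set $A=N+3$, $B=k+3$, and evaluate the inner $l$-sum. By the Legendre duplication formula,
\begin{align*}
\sum_{l\ge 0}\frac{\Gamma(A+B+2l)}{\Gamma(A+l)\Gamma(B+l)}\,w^l=\frac{\Gamma(A+B)}{\Gamma(A)\Gamma(B)}\;{}_3F_2\!\left(\tfrac{A+B}{2},\tfrac{A+B+1}{2},1;A,B;4w\right).
\end{align*}
Because the first two upper parameters differ by $\tfrac12$, this ${}_3F_2$ reduces to an algebraic function of $\sqrt{1-4w}$: either directly via the generating identity
\begin{align*}
\sum_{l\ge 0}\binom{2l+r}{l}w^l=\frac{1}{\sqrt{1-4w}}\left(\frac{1-\sqrt{1-4w}}{2w}\right)^{\!r}
\end{align*}
for generalized central binomial coefficients (after rewriting $\Gamma(A+B+2l)/[\Gamma(A+l)\Gamma(B+l)]=(B+l)\binom{A+B+2l-1}{A+l-1}$ and applying one $w\partial_w$), or equivalently by combining Euler's integral representation for ${}_3F_2$ with the quadratic transformation for ${}_2F_1(\alpha,\alpha+\tfrac12;c;u)$. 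This step produces the radical terms $\sqrt{1-4w}$, $\sqrt{1-4w}-2x+1$ and $\sqrt{1-4w}-2x-2y+1$ on the right-hand side.

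Third, sum successively over $k$ and then $N$. Each remaining series is either of Gauss ${}_2F_1(\cdot,1;\cdot;\cdot)$ type or an Appell $F_1$ or $F_2$ piece, and is evaluated by the Euler and Picard integral representations recalled earlier in Section~3, together with the closed forms $F(4/3,1;1/3;\cdot)$, $F(1/3,1;2;\cdot)$ and $F(4/3,1;2;\cdot)$ already used in the proof of the preceding lemma. This produces the rational factors $(1-X)^{-3}$, $(1-X-z)^{-3}$, $(z-w-1)^{-1}$ and $(z-z^2-w)^{-1}$, together with their mixed $\sqrt{1-4w}$-analogues. A partial-fraction decomposition over the common denominator, followed by the back-substitution $X=x+y$, yields the six displayed terms; the polynomials $W_1$ and $W_2$ arise as the numerators collecting the $\sqrt{1-4w}$-dependent parts.

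The delicate point is Step~2: the particular ${}_3F_2$ at argument $4w$ is not evaluated by any single classical summation theorem, so one must consistently track the principal branch of $\sqrt{1-4w}$ (the branch with $\sqrt{1-4w}\to 1$ as $w\to 0$) in order that the closed form agree with the holomorphic power series at the origin. The subsequent algebraic reassembly of the six terms, though long, is routine.
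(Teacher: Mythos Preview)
Your route is genuinely different from the paper's, and as written it has real gaps.

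\textbf{What the paper actually does.} The paper never collapses $(m,n)$ to $N=m+n$. Instead it writes $m+n+k+2l+5=1+(m+n+l+2)+(k+l+2)$ and uses $\Gamma(a+1)=a\Gamma(a)$ to split $S=S_1+S_2+S_3$, where the three summands have the big $\Gamma$ in the numerator lowered by one and, respectively, nothing, $\Gamma(m+n+l+3)$, or $\Gamma(k+l+3)$ lowered by one in the denominator. Each $S_i$ is then reduced by further index shifts until the $l$-sum is a genuine ${}_2F_1$ with the special parameter pattern $F(a,a+\tfrac12;2a;4w)$ or $F(\tfrac{a+2}{2},\tfrac{a+3}{2};a+1;4w)$, for which the paper quotes the explicit square-root evaluations. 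Only after that does it sum the remaining $m,n$ (and $k$) series as rational functions.

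\textbf{Gaps in your plan.}
\begin{itemize}
\item The closed forms you invoke in Step~3, namely $F(4/3,1;1/3;\cdot)$, $F(1/3,1;2;\cdot)$, $F(4/3,1;2;\cdot)$, are the \emph{cube-root} identities used for the $|z_1|^6$ lemma; they play no role here. The identities actually needed are the quadratic ones $F(a,a+\tfrac12;2a;z)=2^{2a-1}/[\sqrt{1-z}(1+\sqrt{1-z})^{2a-1}]$ and its companion for $F(\tfrac{a+2}{2},\tfrac{a+3}{2};a+1;z)$.
\item Your Step~1 is correct --- the series visibly depends on $x,y$ only through $X=x+y$ --- but then it is impossible for your computation to ``produce the radical term $\sqrt{1-4w}-2x+1$'' as you assert. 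The stated right-hand side carries that factor (and $W_1,W_2$ depend on $x,y$ asymmetrically) only because the paper summed $m$ and $n$ separately; after your collapse you would obtain a formula in $X$ alone, and you would still owe an argument that the displayed right-hand side simplifies to such a form. You do not address this.
\item Step~2 is more delicate than you indicate. The generating function $\sum_l\binom{2l+r}{l}w^l$ matches $\Gamma(A+B+2l)/[\Gamma(A+l)\Gamma(B+l)]$ only when one of $A,B$ equals $1$; for $A=N+3,\ B=k+3$ one must shift the index and subtract a polynomial tail of degree $\min(A,B)-2$, and that tail depends on $N$ and $k$. The subsequent double sum over $N,k$ is then not the clean geometric/Appell reduction you describe. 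This is exactly the difficulty the paper sidesteps by first applying contiguous relations to force the $l$-sum into the special ${}_2F_1$ patterns above.
\end{itemize}

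In short: the collapse to $X=x+y$ is a legitimate and potentially cleaner starting point, but the rest of the sketch borrows machinery from the wrong lemma and glosses over the step where the paper's argument does all the work.
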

\begin{proof}
Since $\Gamma(a+1)=a \Gamma(a)$
   \begin{align*}
    S:=\sum_{m,n,k,l=0}^{\infty}  \frac{ \Gamma\left( m + n + 2  \right) \Gamma\left( m + n + k + 2l + 6  \right) }{ m! n! \Gamma\left( m + n + l + 3 \right) \Gamma\left( k + l + 3  \right) } x^m y^n z^k w^l= \\ \sum_{m,n,k,l=0}^{\infty}  \frac{\left( m + n + k + 2l + 5  \right) \Gamma\left( m + n + 2  \right) \Gamma\left( m + n + k + 2l + 5  \right) }{ m! n! \Gamma\left( m + n + l + 3 \right) \Gamma\left( k + l + 3  \right) } x^m y^n z^k w^l.
   \end{align*}
   Hence
   \begin{align*}   S=& S_1:=\sum_{m,n,k,l=0}^{\infty}  \frac{ \Gamma\left( m + n + 2  \right) \Gamma\left( m + n + k + 2l + 5  \right) }{ m! n! \Gamma\left( m + n + l + 3 \right) \Gamma\left( k + l + 3  \right) } x^m y^n z^k w^l  \\ &+ S_2:=\sum_{m,n,k,l=0}^{\infty}  \frac{ \Gamma\left( m + n + 2  \right) \Gamma\left( m + n + k + 2l + 5  \right) }{ m! n! \Gamma\left( m + n + l + 2 \right) \Gamma\left( k + l + 3  \right) } x^m y^n z^k w^l  \\ &+ S_3:=\sum_{m,n,k,l=0}^{\infty}  \frac{ \Gamma\left( m + n + 2  \right) \Gamma\left( m + n + k + 2l + 5  \right) }{ m! n! \Gamma\left( m + n + l + 3 \right) \Gamma\left( k + l + 2  \right) } x^m y^n z^k w^l.
   \end{align*}
\noindent By proceeding in a similar manner as in the case of the sum of $ S $, we have
          \begin{align*}
       S_1=\sum_{m,n,k,l=0}^{\infty}  \frac{ \Gamma\left( m + n + 2  \right) \Gamma\left( m + n + k + 2l + 5  \right) }{ m! n! \Gamma\left( m + n + l + 3 \right) \Gamma\left( k + l + 3  \right) } x^m y^n z^k w^l= \\ \sum_{m,n,k,l=0}^{\infty}  \frac{ \Gamma\left( m + n + 2  \right) \Gamma\left( m + n + k + 2l + 4  \right) }{ m! n! \Gamma\left( m + n + l + 2 \right) \Gamma\left( k + l + 3  \right) } x^m y^n z^k w^l  \\ + \sum_{m,n,k,l=0}^{\infty}  \frac{ \Gamma\left( m + n + 2  \right) \Gamma\left( m + n + k + 2l + 4  \right) }{ m! n! \Gamma\left( m + n + l + 3 \right) \Gamma\left( k + l + 2  \right) } x^m y^n z^k w^l.
  \end{align*}
   Using the identity $\Gamma(a+1)=a \Gamma(a)$, after a little simplification, we obtain
        \begin{align*}
       \left(1-\frac{w}{z}-z\right)S_1=&\frac{1}{z} \sum_{m,n,k=0}^{\infty}  \frac{ \Gamma\left( m + n + 2  \right) \Gamma\left( m + n + k  + 3  \right) }{ m! n! \Gamma\left( m + n  + 2 \right) \Gamma\left( k  + 2  \right) } x^m y^n z^k  \\ &- \frac{1}{z} \sum_{m,n,l=0}^{\infty}  \frac{ \Gamma\left( m + n + 2  \right) \Gamma\left( m + n  + 2l + 3  \right) }{ m! n! \Gamma\left( m + n + l + 2 \right) \Gamma\left(   l + 2  \right) } x^m y^n  w^l  \\ &+ \sum_{m,n,l=0}^{\infty}  \frac{ \Gamma\left( m + n + 2  \right) \Gamma\left( m + n  + 2l + 4  \right) }{ m! n! \Gamma\left( m + n + l + 3 \right) \Gamma\left(  l + 2  \right) } x^m y^n  w^l.
  \end{align*}
       After some calculations, we obtain
       \begin{align*}
       \left(1-\frac{w}{z}-z\right)S_1&=\frac{2-2 x-2 y-z}{z (1- x -y)^2 (1 - x -y - z)^2}  \\ &+(1- 1/z) \sum_{m,n,l=0}^{\infty}  \frac{ \Gamma\left( m + n + 2  \right) \Gamma\left( m + n  + 2l + 3  \right) }{ m! n! \Gamma\left( m + n + l + 2 \right) \Gamma\left(   l + 2  \right) } x^m y^n  w^l  \\ &+ \sum_{m,n,l=0}^{\infty}  \frac{ \Gamma\left( m + n + 2  \right) \Gamma\left( m + n  + 2l + 3  \right) }{ m! n! \Gamma\left( m + n + l + 3 \right) \Gamma\left(  l + 1  \right) } x^m y^n  w^l.
  \end{align*} Shifting (changing) the summation index $l$, we have

       \begin{align*}
       \left(1-\frac{w}{z}-z\right)&S_1=\frac{2-2 x-2 y-z}{z (1- x -y)^2 (1 - x -y - z)^2}  \\ &+ \left(\frac{1}{w}- \frac{1}{zw} \right) \sum_{m,n,l=0}^{\infty}  \frac{ \Gamma\left( m + n + 2  \right) \Gamma\left( m + n  + 2l + 1  \right) }{ m! n! \Gamma\left( m + n + l + 1 \right) \Gamma\left(   l + 1  \right) } x^m y^n  w^l \\ &- \left(\frac{1}{w}- \frac{1}{zw} \right) \sum_{m,n=0}^{\infty}  \frac{ \Gamma\left( m + n + 2  \right)  }{ m! n!  } x^m y^n \\ &+ \sum_{m,n,l=0}^{\infty}  \frac{ \Gamma\left( m + n + 2  \right) \Gamma\left( m + n  + 2l + 3  \right) }{ m! n! \Gamma\left( m + n + l + 3 \right) \Gamma\left(  l + 1  \right) } x^m y^n  w^l.
  \end{align*}
\noindent   Sum out of $l$
\begin{align*}
       &\left(1-\frac{w}{z}-z\right)S_1=\frac{2-2 x-2 y-z}{z (1- x -y)^2 (1 - x -y - z)^2}  \\ &+  \sum_{m,n=0}^{\infty}  \frac{ (z-1) \Gamma\left( m + n + 2  \right) F(\frac{m + n + 1}{2}, \frac{m + n + 2}{2};m + n + 1;4w )  }{ z w  m! n! } x^m y^n   \\ &+  \sum_{m,n=0}^{\infty}  \frac{(1-z) \Gamma\left( m + n + 2  \right)  }{z w  m! n!  } x^m y^n \\ &+ \sum_{m,n=0}^{\infty}  \frac{ \Gamma\left( m + n + 2  \right) F(\frac{m + n + 3}{2}, \frac{m + n + 4}{2};m + n + 3;4w )  }{ m! n! } x^m y^n.
  \end{align*}
 Using the following well know formula
 $$ F\left( a, a+ \frac{1}{2}; 2a; z \right) = \frac{2^{2a-1}}{ \sqrt{1-z} ( \sqrt{1-z} +1 )^{2a-1} },$$
\noindent we obtain
\begin{align*}
       \left(1-\frac{w}{z}-z\right)S_1=&\frac{2-2 x-2 y-z}{z (1- x -y)^2 (1 - x -y - z)^2}  \\ &+ \left(\frac{1}{w}- \frac{1}{zw} \right) \sum_{m,n=0}^{\infty}  \frac{ \Gamma\left( m + n + 2  \right) 2^{m+n} }{ m! n! \sqrt{1-4w} ( \sqrt{1-4w} +1 )^{m+n} } x^m y^n   \\ &- \left(\frac{1}{w}- \frac{1}{zw} \right) \sum_{m,n=0}^{\infty}  \frac{ \Gamma\left( m + n + 2  \right)  }{ m! n!  } x^m y^n \\ &+ \sum_{m,n=0}^{\infty}  \frac{ \Gamma\left( m + n + 2  \right) 2^{m+n+2} }{ m! n! \sqrt{1-4w} ( \sqrt{1-4w} +1 )^{m+n+2} } x^m y^n.
  \end{align*}
Sum out of  $m$ i $n$ variables, we get
\begin{align*}
       \left(1-\frac{w}{z}-z\right)S_1=&\frac{2-2 x-2 y-z}{z (1- x -y)^2 (1 - x -y - z)^2}  - \left(\frac{1}{w}- \frac{1}{zw} \right) \frac{1}{(1-x-y)^2} \\ &+  \frac{\left(\frac{1}{w}- \frac{1}{zw} \right)   \left(\sqrt{1-4 w}+1\right)^2  + 4}{\sqrt{1-4 w} \left(\sqrt{1-4 w}-2 x-2 y+1\right)^2}.
  \end{align*}
\noindent Hence
\begin{align*}
       S_1=& \frac{2-2 x-2 y-z}{(z-w-1) (1- x -y)^2 (1 - x -y - z)^2} -  \frac{z-w}{(z-w-1) (1-x-y)^2}  \\ &+  \frac{\left(z-w \right)   \left(\sqrt{1-4 w}+1\right)^2  + 4}{ (z-w-1) \sqrt{1-4 w} \left(\sqrt{1-4 w}-2 x-2 y+1\right)^2}.
  \end{align*}
\noindent Now analogous maneuvers for sum $S_2$ lead us to
\begin{align*}
S_2=&\frac{w}{z}S + \frac{1}{z} \sum_{m,n,k=0}^{\infty}  \frac{  \Gamma\left( m + n + k + 4  \right) }{ m! n! \Gamma\left( k + 2  \right) } x^m y^n z^k \\ &- \frac{1}{z} \sum_{m,n,l=0}^{\infty}  \frac{ \Gamma\left( m + n + 2  \right) \Gamma\left( m + n  + 2l + 4  \right) }{ m! n! \Gamma\left( m + n + l + 2 \right) \Gamma\left(  l + 2  \right) } x^m y^n  w^l.
\end{align*}
\noindent Similar as in  $S_1$ case, we have
\begin{align*}
S_2=&\frac{w}{z}S + \frac{6 z (x+y-1)+6 (x+y-1)^2+2 z^2}{z (x+y-1)^3 (x+y+z-1)^3} \\ &- \frac{1}{z w} \sum_{m,n,l=0}^{\infty}  \frac{ \Gamma\left( m + n + 2  \right) \Gamma\left( m + n  + 2l + 2  \right) }{ m! n! \Gamma\left( m + n + l + 1 \right) \Gamma\left(  l + 1  \right) } x^m y^n  w^l \\ &+ \frac{1}{z w} \sum_{m,n=0}^{\infty}  \frac{ \Gamma\left( m + n + 2  \right) \Gamma\left( m + n + 2  \right) }{ m! n! \Gamma\left( m + n + 1 \right)  } x^m y^n.
\end{align*}
\noindent Sum out of $l$
\begin{align*}
S_2=& \frac{w}{z}S + \frac{6 z (x+y-1)+6 (x+y-1)^2+2 z^2}{z (x+y-1)^3 (x+y+z-1)^3} \\ &- \frac{1}{z w} \sum_{m,n=0}^{\infty}  \frac{ \left( \Gamma ( m + n + 2) \right)^2  F\left(\frac{ m + n + 2}{2},\frac{ m + n + 3}{2}; m + n + 1;4 w\right) }{ m! n! \Gamma ( m + n + 1) } x^m y^n   \\ &+ \frac{1}{z w} \sum_{m,n,l=0}^{\infty}  \frac{ \left( m + n + 1  \right) \Gamma\left( m + n + 2  \right) }{ m! n!   } x^m y^n.
\end{align*}
\noindent Using the following formula $$ F\left( \frac{a+2}{2}, \frac{a+3}{2}; a+1; z \right) = \frac{2^{a}((a-1) (\sqrt{1-z}-1) + az) }{(a+1)z (1-z)^{3/2} ( \sqrt{1-z} +1)^{a-1} }, $$
\noindent we get
\begin{align*}
&S_2= \frac{w}{z}S + \frac{6 z (x+y-1)+6 (x+y-1)^2+2 z^2}{z (x+y-1)^3 (x+y+z-1)^3} \\ &-  \sum_{m,n=0}^{\infty}  \frac{  \Gamma ( m + n + 2) ((m + n -1) (\sqrt{1-4w}-1) + (m + n)4w)  }{ m! n! 4z w^2 (1-4w)^{3/2} ( \sqrt{1-4w} +1)^{m+n-1}  } (2x)^m (2y)^n   \\ &+ \frac{1}{z w} \sum_{m,n,l=0}^{\infty}  \frac{ \left( m + n + 1  \right) \Gamma\left( m + n + 2  \right) }{ m! n!   } x^m y^n.
\end{align*}
\noindent Sum out of $m$ and  $n$ variables, we have
\begin{align*}
S_2=& \frac{w}{z}S + \frac{6 z (x+y-1)+6 (x+y-1)^2+2 z^2}{z (x+y-1)^3 (x+y+z-1)^3} + \frac{1+x+y}{w z (1-x-y)^3} \\ &- \frac{3 \left(\sqrt{1-4 w}-1\right) \left(\sqrt{1-4 w}+1\right)^3 x (x+y)}{(1-4 w)^{3/2} w^2 z \left(\sqrt{1-4 w}-2 x+1\right) \left(\sqrt{1-4 w}-2 x-2 y+1\right)^3} \\ &+ \frac{\left(\sqrt{1-4 w}+1\right)^3 \left(\sqrt{1-4 w}-8 x^2+1\right)}{(1-4 w)^{3/2} w z \left(\sqrt{1-4 w}-2 x+1\right) \left(\sqrt{1-4 w}-2 x-2 y+1\right)^3} \\ &+ \frac{\left(\sqrt{1-4 w}+1\right)^3 \left(4 x \left(\sqrt{1-4 w}-2 y-1\right)+4 \sqrt{1-4 w} y-2 y\right)}{(1-4 w)^{3/2} w z \left(\sqrt{1-4 w}-2 x+1\right) \left(\sqrt{1-4 w}-2 x-2 y+1\right)^3}.
\end{align*}
\noindent In $S_3$ case, we have
\begin{align*}
S_3=& z S - \frac{1}{w} \sum_{m,n=0}^{\infty}  \frac{  \Gamma\left( m + n + 3  \right) }{ m! n! } x^m y^n \\ &+  \frac{1}{w}\sum_{m,n,l=0}^{\infty}  \frac{ \Gamma\left( m + n + 2  \right) \Gamma\left( m + n + 2l + 3  \right) }{ m! n! \Gamma\left( m + n + l + 2 \right) \Gamma\left(l + 1  \right) } x^m y^n w^l.
\end{align*}
\noindent Hence
\begin{align*}
&S_3= z S - \frac{1}{w} \sum_{m,n=0}^{\infty}  \frac{  \Gamma\left( m + n + 3  \right) }{ m! n! } x^m y^n \\ &+  \sum_{m,n=0}^{\infty}  \frac{ \Gamma\left( m + n + 3  \right) ((m+n) (\sqrt{1-4w}-1) + (m+n+1) 4w) }{ m! n! (m+n+2)2 w^2 (1-4w)^{3/2} ( \sqrt{1-4w} +1)^{m+n}  } (2x)^m (2y)^n .
\end{align*}
\noindent After a little calculations, we have
\begin{align*}
S_3=& z S + \frac{2}{w (x+y-1)^3} \\ &+  \frac{16 \left(\left(\sqrt{1-4 w}+1\right) (1-x)-2 w^2\right)}{(1-4 w)^{3/2} w \left(\sqrt{1-4 w}-2 x+1\right) \left(\sqrt{1-4 w}-2 x-2 y+1\right)^3} \\ &- \frac{16 \left(\sqrt{1-4 w} y+2 \sqrt{1-4 w}-2 x (x+y+1)+y+4\right)}{(1-4 w)^{3/2} \left(\sqrt{1-4 w}-2 x+1\right) \left(\sqrt{1-4 w}-2 x-2 y+1\right)^3},
\end{align*} This completes the proof of Lemma \ref{lemd3}.
\end{proof}

\section{\bf Computation of the kernel}

For Reinhardt domains it is a standard method for computing the Bergman kernel  to use series representation, since we can choose $\phi_{\alpha} (z) = \frac{z^{\alpha}}{ \|z^{\alpha}\|}.$ Put $\Phi_{\alpha}(\zeta)= z_1^{\alpha_1} z_2^{\alpha_2} z_3^{\alpha_3} $. It is well known, that function $f$ holomorphic in a Reinhardt domain $D \subset \mathbb{C}^n$  has a “global” expansion into a Laurent series $f(z)=\sum_{\alpha \in \mathbb{Z}^n} a_{\alpha} z^{\alpha}$, $z \in D$ (see Proposition 1.7.15 (c) in \cite{JP}). Moreover if $D \cap (\mathbb{C}^{j-1} \times \{0\} \times \mathbb{C}^{n-j} ) \neq \emptyset $, $j=1,\ldots, n$ then $a_{\alpha}=0$ for $\alpha \in \mathbb{Z}^n \setminus \mathbb{Z}^n_{+}$ (see Proposition 1.6.5 (c) in \cite{JP}). Therefore  $\{\Phi_{\alpha} \}$ such that each $\alpha_i \geq 0$ is a complete orthogonal set for $L^2(D_1)$ and  $L^2(D_2)$. \newline If $D$ is a Reinhardt domain, $f \in L^2_a(D) := \mathcal{O}(D) \cap L^2(D)$,  $f(z)=\sum_{\alpha \in \mathbb{Z}^n} a_{\alpha} z^{\alpha}$, then $\{  z^{\alpha} \colon \alpha \in \sum(f)  \} \subset L^2_a(D), $ where $\sum(f):=\{ \alpha \in \mathbb{Z}^n \colon a_{\alpha} \neq 0 \}$ (for proof see \cite{JP} p. 67). Thus it is easy to check, that the set $\{z_1^{\alpha_1} z_2^{\alpha_2} z_3^{\alpha_3}\colon   \alpha_2 \geq 0,  \alpha_3 \geq 0, \alpha_1 \geq -1  - \alpha_3\}$ is a complete orthogonal set for $L^2_a(D_3)$.

\begin{pr}\label{pr1} Let $\alpha_i \in \mathbb{Z}_{+}$ for $i=1,2,3$. Then, we have
$${\left\| z_1^{\alpha_1} z_2^{\alpha_2} z_3^{\alpha_3}\right\|}^2_{L^2(D_1)}= \frac{ \pi^3 \Gamma(\alpha_1 + 1) \Gamma( \alpha_2 + \alpha_3 +3)  }{ (\alpha_2 + 1) (\alpha_3 +1 ) \Gamma( \alpha_1 + \alpha_2 + \alpha_3 + 4)} $$
\end{pr}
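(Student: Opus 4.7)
The plan is a direct calculation in polar/radial coordinates, exploiting the fact that $D_1$ is a Reinhardt domain and the monomials $z_1^{\alpha_1}z_2^{\alpha_2}z_3^{\alpha_3}$ depend on the moduli only.

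First I would switch to polar coordinates $z_j = r_j e^{i\theta_j}$. Since the integrand $|z_1|^{2\alpha_1}|z_2|^{2\alpha_2}|z_3|^{2\alpha_3}$ is independent of the angles $\theta_j$, the angular integration contributes a factor of $(2\pi)^3$, and we are left with
\begin{align*}
\|z^{\alpha}\|^2_{L^2(D_1)} = (2\pi)^3 \int_{E} r_1^{2\alpha_1+1} r_2^{2\alpha_2+1} r_3^{2\alpha_3+1} \, dr_1\, dr_2\, dr_3,
\end{align*}
where $E = \{(r_1,r_2,r_3) \in \mathbb{R}^3_{+} : r_1^2 + r_2^2 < 1,\ r_1^2 + r_3^2 < 1\}$. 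Next I would substitute $t_j = r_j^2$, which absorbs the Jacobian $r_j\,dr_j = \tfrac{1}{2} dt_j$ and produces an overall factor $1/8$, giving
\begin{align*}
\|z^{\alpha}\|^2_{L^2(D_1)} = \pi^3 \int_{E'} t_1^{\alpha_1} t_2^{\alpha_2} t_3^{\alpha_3} \, dt_1\, dt_2\, dt_3,
\end{align*}
with $E' = \{(t_1,t_2,t_3) \in \mathbb{R}^3_{+} : t_1 + t_2 < 1,\ t_1 + t_3 < 1\}$.

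The crucial observation is that once $t_1$ is fixed, the constraints on $t_2$ and $t_3$ decouple: $t_2 \in (0, 1-t_1)$ and $t_3 \in (0, 1-t_1)$ independently. Applying Fubini and computing the two elementary inner integrals yields a factor $\frac{(1-t_1)^{\alpha_2+1}}{\alpha_2+1}\cdot \frac{(1-t_1)^{\alpha_3+1}}{\alpha_3+1}$. What remains is the one-dimensional Beta integral
\begin{align*}
\int_0^1 t_1^{\alpha_1} (1-t_1)^{\alpha_2 + \alpha_3 + 2} \, dt_1 = B(\alpha_1 + 1, \alpha_2 + \alpha_3 + 3) = \frac{\Gamma(\alpha_1 + 1)\Gamma(\alpha_2 + \alpha_3 + 3)}{\Gamma(\alpha_1 + \alpha_2 + \alpha_3 + 4)}.
\end{align*}
Combining the pieces gives exactly the stated formula.

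There is no serious obstacle here: the computation is routine, and the key structural point is that the two ellipsoidal constraints share only the variable $z_1$, so once we integrate it out last the intermediate integrals decouple and reduce to elementary Beta-type integrals.
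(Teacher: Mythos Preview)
Your argument is correct and follows essentially the same route as the paper: polar coordinates, integrate out the angles, then integrate $r_2$ and $r_3$ first (since for fixed $r_1$ the two constraints decouple), leaving a Beta-type integral in the remaining radial variable. The only cosmetic difference is your substitution $t_j=r_j^2$, which the paper omits; otherwise the two proofs are identical.
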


\begin{pr}\label{pr2} Let $\alpha_i \in \mathbb{Z}_{+}$ for $i=1,2,3$. Then, we have
$${\left\| z_1^{\alpha_1} z_2^{\alpha_2} z_3^{\alpha_3} \right\|}^2_{L^2(D_2)}= \frac{ 2 \pi^3 \Gamma(\frac{\alpha_1 + 1}{3}) \Gamma( \alpha_2 + \alpha_3 +3)  }{ 3 (\alpha_2 + 1) (\alpha_3 +1 ) \Gamma( \frac{\alpha_1 + 1}{3} + \alpha_2 + \alpha_3 + 3)} $$
\end{pr}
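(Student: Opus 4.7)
The plan is to mimic the approach used for Proposition \ref{pr1}, since the only change between $D_1$ and $D_2$ is the exponent on $|z_1|$, which only affects the radial substitution for the first variable. First I would pass to polar coordinates $z_j = r_j e^{i\theta_j}$, in which the volume element is $r_1 r_2 r_3 \, dr_1 dr_2 dr_3 d\theta_1 d\theta_2 d\theta_3$ and the integrand $|z_1^{\alpha_1}z_2^{\alpha_2}z_3^{\alpha_3}|^2 = r_1^{2\alpha_1}r_2^{2\alpha_2}r_3^{2\alpha_3}$ is independent of the $\theta_j$'s. The angular integration produces a factor $(2\pi)^3$, reducing the problem to
\[
\|z^\alpha\|^2_{L^2(D_2)} = (2\pi)^3 \int_{\tilde D_2} r_1^{2\alpha_1+1}r_2^{2\alpha_2+1}r_3^{2\alpha_3+1}\, dr_1 dr_2 dr_3,
\]
where $\tilde D_2 = \{r_i>0 : r_1^6+r_2^2<1,\ r_1^6+r_3^2<1\}$.

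Next I would apply the substitution adapted to the defining exponents: $s=r_1^6$, $t=r_2^2$, $u=r_3^2$. A routine calculation gives $r_1^{2\alpha_1+1}dr_1 = \tfrac{1}{6} s^{(\alpha_1-2)/3} ds$, $r_2^{2\alpha_2+1}dr_2=\tfrac12 t^{\alpha_2}dt$ and $r_3^{2\alpha_3+1}dr_3=\tfrac12 u^{\alpha_3}du$, while the region becomes the simplex-like set $\{s,t,u>0:\ s+t<1,\ s+u<1\}$. Combining the Jacobians with the $(2\pi)^3$ reduces the problem to an integral of $s^{(\alpha_1-2)/3}t^{\alpha_2}u^{\alpha_3}$ over that region, up to an explicit overall constant.

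Crucially, the constraints on $t$ and $u$ decouple once $s$ is fixed: both become $t,u\in(0,1-s)$. I would evaluate the inner integrals in $t$ and $u$ directly, yielding a factor $\frac{(1-s)^{\alpha_2+1}}{\alpha_2+1}\cdot\frac{(1-s)^{\alpha_3+1}}{\alpha_3+1}$, so that the remaining $s$-integral is
\[
\int_0^1 s^{(\alpha_1-2)/3}(1-s)^{\alpha_2+\alpha_3+2}\,ds = B\!\left(\tfrac{\alpha_1+1}{3},\,\alpha_2+\alpha_3+3\right),
\]
which is a standard Beta function. Using $B(p,q)=\Gamma(p)\Gamma(q)/\Gamma(p+q)$ and collecting the constants produces a formula of the stated shape.

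There is no real obstacle; this is a bookkeeping exercise and the only point requiring care is the tracking of the multiplicative constant coming from the three Jacobians $\tfrac16\cdot\tfrac12\cdot\tfrac12$ together with $(2\pi)^3$. One small remark: the convergence of the Beta integral requires $(\alpha_1-2)/3>-1$, i.e.\ $\alpha_1\ge 0$, which is precisely our hypothesis, so the argument is valid throughout the range of $\alpha$ we need.
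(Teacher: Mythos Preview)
Your proposal is correct and is essentially the same argument as the paper's: polar coordinates, integrate out the angular variables, then the $r_2,r_3$ (equivalently $t,u$) integrals, and finish with a Beta integral in the remaining variable. The only cosmetic difference is that the paper leaves the last step as the formula $\int_0^1 x^a(1-x^6)^b\,dx = \tfrac{1}{6}B\bigl(\tfrac{a+1}{6},b+1\bigr)$ rather than making the substitution $s=r_1^6$ explicitly as you do.
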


\begin{pr}\label{pr3} For $\alpha_2, \alpha_3 \in \mathbb{Z}_{+}$ and $\alpha_1 \geq -1 - \alpha_3$, we have
$${\left\| z_1^{\alpha_1} z_2^{\alpha_2} z_3^{\alpha_3} \right\|}^2_{L^2(D_3)}= \frac{  \pi^3 \Gamma(\alpha_1 + \alpha_3 + 2) \Gamma( \alpha_2 + \alpha_3 +3)  }{ (\alpha_2 + 1) (\alpha_3 +1 ) \Gamma( \alpha_1 + \alpha_2 + 2\alpha_3 + 5)} .$$
\end{pr}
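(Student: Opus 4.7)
The plan is to compute the norm squared directly as an integral, exploiting the Reinhardt structure of $D_3$. I would write
\begin{align*}
\|z_1^{\alpha_1}z_2^{\alpha_2}z_3^{\alpha_3}\|^2_{L^2(D_3)} = \int_{D_3}|z_1|^{2\alpha_1}|z_2|^{2\alpha_2}|z_3|^{2\alpha_3}\,dV(z)
\end{align*}
and switch to polar coordinates $z_j=r_je^{i\theta_j}$. Since the integrand depends only on the moduli, the three angular integrations produce a factor of $(2\pi)^3$, reducing the problem to a triple real integral over the region
\begin{align*}
r_1^2+r_2^2<1,\qquad r_1^4+r_3^2<r_1^2,\qquad r_j\ge 0.
\end{align*}

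Next I would substitute $u_j=r_j^2$, which turns $r_j^{2\alpha_j+1}\,dr_j$ into $\tfrac{1}{2}u_j^{\alpha_j}\,du_j$ and absorbs the factor $8\pi^3=(2\pi)^3$ into a clean $\pi^3$ out front. The constraints become $u_1+u_2<1$ and $u_3<u_1(1-u_1)$ with $u_1\in(0,1)$. I would then iterate the integral in the order $u_3,u_2,u_1$:
\begin{align*}
\pi^3\int_0^1 u_1^{\alpha_1}\int_0^{1-u_1} u_2^{\alpha_2}\int_0^{u_1(1-u_1)} u_3^{\alpha_3}\,du_3\,du_2\,du_1.
\end{align*}
The $u_3$-integral contributes $\tfrac{(u_1(1-u_1))^{\alpha_3+1}}{\alpha_3+1}$ and the $u_2$-integral contributes $\tfrac{(1-u_1)^{\alpha_2+1}}{\alpha_2+1}$, leaving
\begin{align*}
\frac{\pi^3}{(\alpha_2+1)(\alpha_3+1)}\int_0^1 u_1^{\alpha_1+\alpha_3+1}(1-u_1)^{\alpha_2+2\alpha_3+2}\,du_1.
\end{align*}

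The remaining integral is the beta function $B(\alpha_1+\alpha_3+2,\alpha_2+2\alpha_3+3)$, which yields exactly the claimed $\Gamma$-quotient after noting $(\alpha_1+\alpha_3+2)+(\alpha_2+2\alpha_3+3)=\alpha_1+\alpha_2+2\alpha_3+5$. The one subtlety worth flagging — and the only place the hypothesis $\alpha_1\ge -1-\alpha_3$ is used — is the convergence of this last integral at $u_1=0$: the exponent $\alpha_1+\alpha_3+1$ must be strictly greater than $-1$, which is exactly $\alpha_1+\alpha_3\ge -1$. No step is a genuine obstacle; the computation is essentially a bookkeeping exercise in iterated beta integrals, and the role of the unusual index range $\alpha_1\ge -1-\alpha_3$ is simply to guarantee square-integrability, consistent with the characterization of the monomial basis of $L^2_a(D_3)$ recorded before the proposition.
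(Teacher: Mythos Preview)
Your approach is correct and is exactly what the paper intends: it proves only the proposition for $D_2$ in detail (polar coordinates followed by a beta integral) and states that the remaining propositions, including this one, are proved similarly. One small slip to fix in your write-up: the exponent of $(1-u_1)$ in the final integral should be $\alpha_2+\alpha_3+2$, not $\alpha_2+2\alpha_3+2$; with that correction the beta-parameter sum $(\alpha_1+\alpha_3+2)+(\alpha_2+\alpha_3+3)=\alpha_1+\alpha_2+2\alpha_3+5$ is then genuinely correct rather than a fortuitous second arithmetic error.
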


\begin{pr}\label{pr4} Let $\alpha_i \in \mathbb{Z}_{+}$ for $i=1,2,3,4$. Then, we have
$${\left\| z^{\alpha}\right\|}^2_{L^2(D_4)}= \frac{  \pi^4 \Gamma(\alpha_1+1 ) \Gamma(\alpha_2 +1) \Gamma(\alpha_1 + \alpha_2 + \alpha_4 + 3) \Gamma( \alpha_3 + \alpha_4 +3)  }{ (\alpha_3 + 1) (\alpha_4 +1 )  \Gamma( \alpha_1 + \alpha_2 + 2) \Gamma( \alpha_1 + \alpha_2 + \alpha_3  +2\alpha_4 + 6)} .$$
\end{pr}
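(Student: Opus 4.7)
\medskip
\noindent\textbf{Proof plan.} The natural approach is to pass to polar coordinates and exploit the fact that $D_4$ is a Reinhardt domain. Write $z_j = r_j e^{i\theta_j}$; then
$$\|z^{\alpha}\|^2_{L^2(D_4)} = (2\pi)^4 \int_{\Omega} r_1^{2\alpha_1+1} r_2^{2\alpha_2+1} r_3^{2\alpha_3+1} r_4^{2\alpha_4+1} \, dr_1 dr_2 dr_3 dr_4,$$
where $\Omega$ is the region $r_j > 0$, $r_1^2 + r_2^2 + r_3^2 < 1$, $(r_1^2 + r_2^2)^2 + r_4^2 < r_1^2 + r_2^2$. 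The substitution $t_j = r_j^2$ (so $r_j\, dr_j = dt_j/2$) transforms the integral into
$$\pi^4 \int_{\widetilde{\Omega}} t_1^{\alpha_1} t_2^{\alpha_2} t_3^{\alpha_3} t_4^{\alpha_4} \, dt_1 dt_2 dt_3 dt_4,$$
with $\widetilde{\Omega} = \{t_j \geq 0 : t_1 + t_2 + t_3 < 1,\ (t_1+t_2)^2 + t_4 < t_1 + t_2\}$.

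The key observation is that once $t_1, t_2$ are fixed (with $t_1+t_2 < 1$), the two remaining constraints decouple: $t_3$ ranges freely over $[0, 1-t_1-t_2]$ while $t_4$ ranges over $[0, (t_1+t_2)(1-(t_1+t_2))]$. Performing the inner integrations yields the factors $1/(\alpha_3+1)$ and $1/(\alpha_4+1)$ together with the powers $(1-t_1-t_2)^{\alpha_3+1}$ and $(t_1+t_2)^{\alpha_4+1}(1-(t_1+t_2))^{\alpha_4+1}$, so that the remaining integral is
$$\frac{\pi^4}{(\alpha_3+1)(\alpha_4+1)} \int t_1^{\alpha_1} t_2^{\alpha_2} (t_1+t_2)^{\alpha_4+1} (1-t_1-t_2)^{\alpha_3+\alpha_4+2} \, dt_1 dt_2$$
over the simplex $t_1, t_2 \geq 0$, $t_1+t_2 < 1$.

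For the final double integral I would change variables by $s = t_1+t_2$, $u = t_1/s$, with Jacobian $dt_1\, dt_2 = s\, ds\, du$ and $(u,s) \in (0,1)^2$. This splits the integral into the product
$$\Bigl(\int_0^1 u^{\alpha_1}(1-u)^{\alpha_2}\, du\Bigr) \Bigl(\int_0^1 s^{\alpha_1+\alpha_2+\alpha_4+2}(1-s)^{\alpha_3+\alpha_4+2}\, ds\Bigr),$$
each of which is a Beta integral. Evaluating them gives exactly $\Gamma(\alpha_1+1)\Gamma(\alpha_2+1)/\Gamma(\alpha_1+\alpha_2+2)$ and $\Gamma(\alpha_1+\alpha_2+\alpha_4+3)\Gamma(\alpha_3+\alpha_4+3)/\Gamma(\alpha_1+\alpha_2+\alpha_3+2\alpha_4+6)$, and assembling the pieces produces the claimed formula.

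There is essentially no hard step here; the only thing to verify carefully is the decoupling of $t_3$ and $t_4$ from each other (only $t_1, t_2$ couple them through the upper limits) and the correct bookkeeping of exponents after the $t_3$ and $t_4$ integrations. The whole argument parallels the computation of $\|z^{\alpha}\|^2$ for the homogeneous ball, with the extra simplex-substitution trick needed because $t_1$ and $t_2$ enter the constraints only through their sum.
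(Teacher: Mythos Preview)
Your proof is correct and follows essentially the same approach the paper indicates (polar coordinates, integrate out the angular variables, then reduce the radial integral to Beta integrals); the paper only carries out the details for $D_2$ and declares the remaining propositions ``similar.'' Your simplex substitution $s=t_1+t_2$, $u=t_1/s$ is the natural extra step needed for $D_4$ and the bookkeeping of exponents is accurate.
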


The proof of  all above propositions is similar, and so we only proof  \ref{pr2}.

\begin{proof}
$$\|  z_1^{\alpha_1} z_2^{\alpha_2} z_3^{\alpha_3}  \|^2_{L^2(D_2)} = \int\limits_{D_2} |z_1|^{2\alpha_1} |z_2|^{2\alpha_2} |z_3|^{2\alpha_3}dV(z)$$
we introduce polar coordinate in each variable by putting $z_1=r_1 e^{i\theta_1}$, $z_2=r_2 e^{i\theta_2}$, $z_3=r_3 e^{i\theta_3}$. After doing so, and integrating out the angular variables we have

$$(2 \pi)^3 \int_0^1 \int_0^{\sqrt{1-r_1^6}} \int_0^{\sqrt{1-r_1^6}} r_1^{2\alpha_1 + 1} r_2^{2\alpha_2 + 1} r_3^{2\alpha_3 + 1}\, dr_1 dr_2 dr_3$$
Integrating out of $r_2$ and $r_3$ variables, we obtain

$$ \frac{(2 \pi)^3}{(2\alpha_2 + 2 ) (2\alpha_3 + 2) } \int_0^1  r_1^{2\alpha_1 + 1} (1-r_1^6)^{\alpha_2 + \alpha_3 + 2} \, dr_1$$
After little calculation using well known fact $$\int_0^1 x^a(1-x^6)^b \,dx = \frac{\Gamma((a+1)/6) \Gamma(b+1)}{3  \Gamma((a+1)/6 + b + 1) } ,$$  we obtain desired result.

\end{proof}

Now by series representation of the Bergman kernel function, we have

$$K_{D_1} (z,w) = \frac{1}{\pi^3} \sum_{\alpha_1, \alpha_2 , \alpha_3 =0 }^{ \infty} \frac{(\alpha_2 + 1) (\alpha_3 +1 ) \Gamma( \alpha_1 + \alpha_2 + \alpha_3 + 4) }{\Gamma(\alpha_1 + 1) \Gamma( \alpha_2 + \alpha_3 +3)} \nu_1^{\alpha_1} \nu_2^{\alpha_2} \nu_3^{\alpha_3} ,$$
where $\nu_1= z_1 \overline{w}_1$, $\nu_2= z_2 \overline{w}_2$, $\nu_3= z_3 \overline{w}_3$. Sum out of $\nu_1$ variable, we have

$$\frac{1}{\pi^3 (1-\nu_1)^4} \sum_{\alpha_2, \alpha_3 =0 }^{ \infty} \frac{(\alpha_2 + 1) (\alpha_3 +1 ) \Gamma(  \alpha_2 + \alpha_3 + 4) }{ \Gamma( \alpha_2 + \alpha_3 +3)} \left( \frac{\nu_2}{1- \nu_1 } \right)^{\alpha_2} \left( \frac{\nu_3}{1- \nu_1 } \right)^{\alpha_3} $$

Since $\Gamma(  \alpha_2 + \alpha_3 + 4) = (\alpha_2 + \alpha_3 +3) \Gamma( \alpha_2 + \alpha_3 +3)$, that

$$\frac{1}{\pi^3 (1-\nu_1)^4} \sum_{\alpha_2, \alpha_3 =0 }^{ \infty} (\alpha_2 + 1) (\alpha_3 +1 ) (  \alpha_2 + \alpha_3 + 3)  \left( \frac{\nu_2}{1- \nu_1 } \right)^{\alpha_2} \left( \frac{\nu_3}{1- \nu_1 } \right)^{\alpha_3} $$

Finally using

$$ \sum_{n, k =0 }^{ \infty} (n + 1) (k +1 ) ( n+ k + 3)  x^{n} y^{k} = \frac{3 - x -y -xy}{(1-x)^3 (1-y)^3}$$

we obtain explicit formula for domain $D_1$.  Similarly  we can obtain  Bergman kernel for domains

$$D_{p,q}= \{ (z_1, z_2, z_3) \in \mathbb{C}^3 : |z_1|^2 + |z_2|^p <1 , \quad  |z_1|^2 + |z_3|^q < 1\}, $$
where $p,q$ are positive reals numbers. \newline Now we consider domain $D_2$. As before by \ref{pr2}, we have

$$K_{D_2} (z,w) =  \sum_{\alpha_1, \alpha_2 , \alpha_3 =0 }^{ \infty} \frac{3(\alpha_2 + 1) (\alpha_3 +1 ) \Gamma( \frac{\alpha_1 + 1}{3} + \alpha_2 + \alpha_3 + 3) }{ 2 \pi^3 \Gamma(\frac{\alpha_1 + 1}{3}) \Gamma( \alpha_2 + \alpha_3 +3) } \nu_1^{\alpha_1} \nu_2^{\alpha_2} \nu_3^{\alpha_3} ,$$
where $\nu_1= z_1 \overline{w}_1$, $\nu_2= z_2 \overline{w}_2$, $\nu_3= z_3 \overline{w}_3$.

Using partial derivative notation we can write

$$ \frac{3}{2 \pi^3} \frac{\partial^2}{ \partial \nu_2 \partial \nu_3} \sum_{\alpha_1, \alpha_2 , \alpha_3 =0 }^{ \infty} \frac{ \Gamma( \frac{\alpha_1 + 1}{3} + \alpha_2 + \alpha_3 + 3) }{ \Gamma(\frac{\alpha_1 + 1}{3}) \Gamma( \alpha_2 + \alpha_3 +3) } \nu_1^{\alpha_1} \nu_2^{\alpha_2 + 1} \nu_3^{\alpha_3 + 1} ,$$

Now we can express above sum in $F_8$ hypergeometric function terms separating $\alpha_1$ modulo 3.

\begin{align*}
\frac{3}{2 \pi^3 } \frac{\partial^2}{\partial \nu_2 \partial \nu_3} \left\{\nu_2 \nu_3 \sum_{j=1}^3 \nu_1^{j-1} C_j F_8\left(3+\frac{j}{3},1,1,1;\frac{j}{3},3; \nu_1^3,\nu_2,\nu_3\right) \right\} ,
\end{align*}

where  $C_i=\frac{\Gamma\left(3+\frac{i}{3}\right)}{2 \Gamma\left(\frac{i}{3}\right) }$ for $i=1,2,3$. After some calculations using explicit formulas from lemma \ref{lem1} we obtain desired result. \newline It is also possible in analogous way, compute Bergman kernel function for $$ \{(z_1, z_2, z_3) \in \mathbb{C}^3 : |z_1|^r + |z_2|^2 <1 , \quad  |z_1|^2 + |z_3|^2 < 1\} ,$$ for any rational number $r$ and  for $$ \{(z_1, z_2, z_3) \in \mathbb{C}^3 : |z_1|^4 + |z_2|^4 <1 , \quad  |z_1|^4 + |z_3|^4 < 1\} .$$

Now we consider domain $D_3$. From Proposition \ref{pr3}, we have

\begin{align*}
K_{D_3} (z,w) =\sum_{\alpha_2, \alpha_3 , \alpha_1 =0 }^{ \infty} \frac{(\alpha_3+1) (\alpha_2+1)  \Gamma (\alpha_1+\alpha_2+\alpha_3+4)}{ \pi^3 \nu_1 \Gamma (\alpha_1+1) \Gamma (\alpha_2+\alpha_3+3)}  \nu_1^{\alpha_1} \nu_2^{\alpha_2} \left(\frac{\nu_3}{\nu_1}\right)^{\alpha_3}
\end{align*}
Sum out of $\nu_1$ variable, we have

\begin{align*}
 \sum_{\alpha_2 , \alpha_3 =0 }^{ \infty} \frac{(\alpha_3+1) (\alpha_2+1)  \Gamma (\alpha_2+\alpha_3+4)}{(1-\nu_1)^4 \pi^3 \nu_1  \Gamma (\alpha_2+\alpha_3+3)}  \left( \frac{\nu_2}{1-\nu_1} \right)^{\alpha_2} \left(\frac{\nu_3}{(1-\nu_1)\nu_1}\right)^{\alpha_3}
\end{align*}

Sum out of $\nu_2$ variable, we get

\begin{align*}
 \sum_{ \alpha_3 =0 }^{ \infty} \frac{(\alpha_3+1) (\alpha_3 \nu_1 + \alpha_3 \nu_2 - \alpha_3 + 3 \nu_1+\nu_2-3)  }{\pi^3 \nu_1 (1-\nu_1)^2 (\nu_1+\nu_2-1)^3}   \left(\frac{\nu_3}{(1-\nu_1)\nu_1}\right)^{\alpha_3}
\end{align*}

Finally sum out of $\alpha_3$ variable, we obtain the desired result.

In order to prove formula for domain $D_4$ is sufficient to use lemma \ref{lemd3}.

\section{\bf Lu Qi-Keng’s problem}

A domain $\Omega \subset \mathbb{C}^n$ is called a Lu Qi-Keng domain if $K_{\Omega}(z, \overline{w}) \neq 0$ for all $z, w \in \Omega$. Obviously, a biholomorphic image of a Lu Qi-Keng domain is a Lu Qi-Keng domain due to the rule of the Bergman kernel transformation between two biholomorphic equivalent domains. A Cartesian product of two Lu Qi-Keng domains is a Lu Qi-Keng domain. If $K_{\Omega} \neq const$ and $\Omega$ is the sum of an increasing sequence of Lu Qi-Keng domains $\Omega_m$, then $\Omega$ is a Lu Qi-Keng domain due to the Ramadanov theorem and Hurwitz theorem. However, it is not always easy to determine whether or not a given domain is Lu Qi-Keng domain. In 1969, M. Skwarczynski \cite{SKWA} gave the first example that the Bergman kernel on an annulus in the complex plane $\Omega = \{r < |z| < 1\}$ has zeros if $0 < r < e^{-2}$. Since the Bergman kernel for a bounded symmetric domain is a negative power of a certain polynomial, it has no zeros anywhere. In 1996, Boas \cite{Boas1} proved that bounded Lu Qi-Keng domains of holomorphy in $\mathbb{C}^n$ form a nowhere dense subset of all bounded domains of holomorphy. Since then the concrete forms of non-Lu Qi-Keng domains have been found in the various classes of domains in $\mathbb{C}^n$. The minimal ball \cite{OPY} with $n \geq 4$ and the symmetrized polydisks \cite{NZ} with $n \geq 3$ are not Lu Qi-Keng domains. For the complex ellipsoids, see \cite{BS, ZY}. \newline The explicit formula of the Bergman kernel function for the domain $D$ enables us to investigate
whether the Bergman kernel has zeros in $D \times D$ or not. We will call this kind of problem a Lu Qi-Keng problem. The motivation of this problem comes from the Riemann mapping theorem. If $n \geq  2$, then there is no analogue of Riemann mapping theorem in $\mathbb{C}^n$. Thus the following natural question arises: Are there canonical representatives of biholomorphic equivalence classes of domains? In higher dimensions, Bergman himself \cite{BE} introduced a representative
domain to which a given domain may be mapped by representative coordinates. Let $K(z,w)$ be the Bergman kernel for a bounded domain $D \in \mathbb{C}^n$, and define $$T_{i \overline{j}}(z)=(g_{ij}) := \frac{\partial^2  }{ \partial \zeta_i \partial \overline{\zeta}_j} \log K(\zeta,\zeta)_{|\zeta=z}.$$ Then its converse is $T^{  \overline{j} i} (z)=(g_{ji}^{-1})$. Hence the local representative coordinate $f (z) =(f_1, \ldots , f_n)$ based at the point $z_0$ is given by $$f_i(z)=\sum_{i=1}^n T^{  \overline{j} i} (z_0) \frac{\partial}{ \partial \overline{\zeta}_j} \log \frac{K(z,\zeta)}{K(\zeta,\zeta) }{\Big|_{\zeta=z_0}}$$ for $i = 1, \ldots , n.$
In 1966, Lu Qi-Keng \cite{Qi} observed the following phenomenon: It is necessary that the Bergman kernel $K(z,w)$ has no zeros in order to define the Bergman representative coordinates.

\begin{lemma}
The domains  $D_1$ and $D_3$ are Lu Qi-Kengs domains.
\end{lemma}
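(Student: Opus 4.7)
The plan is to reduce the $D_3$ case to the $D_1$ case by a biholomorphism, and then handle $D_1$ by factoring the explicit kernel and bounding the numerator away from zero using the triangle inequality applied to $1-\nu_1$.

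First, for the reduction, consider the holomorphic map $F(z_1,z_2,u)=(z_1,z_2,z_1u)$ with Jacobian determinant $z_1$. The second inequality defining $D_3$ forces $z_1\neq 0$ on $D_3$, and a direct check shows that $F$ restricts to a biholomorphism $D_1\setminus\{z_1=0\}\to D_3$. The analytic subset $\{z_1=0\}$ is removable for $L^2$-holomorphic functions (the monomial computation already used in the paper shows that $z^\alpha\in L^2_a(D_1\setminus\{z_1=0\})$ forces $\alpha_1\geq 0$, exactly as for $L^2_a(D_1)$), so $K_{D_1}=K_{D_1\setminus\{z_1=0\}}$. The standard transformation rule then yields
\[
K_{D_3}(F(z),F(w))\cdot z_1\overline{w_1}=K_{D_1}(z,w).
\]
Since $z_1\overline{w_1}$ has no zeros on the relevant domain, $D_3$ is Lu Qi-Keng as soon as $D_1$ is, so it suffices to treat $D_1$.

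Next, for $D_1$ the denominator $(1-\nu_1-\nu_2)^3(1-\nu_1-\nu_3)^3$ is harmless: Cauchy--Schwarz applied to $|\nu_i|=|z_i||w_i|$ gives $|\nu_1|+|\nu_j|<1$ for $j=2,3$, hence $|1-\nu_1-\nu_j|\geq 1-(|\nu_1|+|\nu_j|)>0$. Taking $z=w$ with appropriate phases shows that the image of $(z,w)\mapsto(\nu_1,\nu_2,\nu_3)$ on $D_1\times D_1$ is exactly $\{|\nu_1|+|\nu_2|<1,\;|\nu_1|+|\nu_3|<1\}$, so it remains to show that
\[
N_1(\nu_1,\nu_2,\nu_3)=3(1-\nu_1)^2-(1-\nu_1)(\nu_2+\nu_3)-\nu_2\nu_3
\]
has no zero on this set.

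The central computation is as follows. Write $\alpha:=1-\nu_1$ and $\beta:=1-|\nu_1|$. The triangle inequality $|\alpha|+|1-\alpha|\geq 1$ gives the key estimate $|\alpha|\geq\beta$. If $N_1=0$, then (treating $N_1$ as linear in $\nu_3$; the exceptional case $\alpha+\nu_2=0$ gives $N_1=4\alpha^2\neq0$) one solves $\nu_3=\alpha(3\alpha-\nu_2)/(\alpha+\nu_2)$. Applying $|3\alpha-\nu_2|\geq 3|\alpha|-|\nu_2|$ and $|\alpha+\nu_2|\leq|\alpha|+|\nu_2|$, and observing that $t\mapsto|\alpha|(3|\alpha|-t)/(|\alpha|+t)$ is strictly decreasing, one gets
\[
|\nu_3|\;\geq\;\frac{|\alpha|(3|\alpha|-|\nu_2|)}{|\alpha|+|\nu_2|}\;>\;\frac{|\alpha|(3|\alpha|-\beta)}{|\alpha|+\beta},
\]
the strict step using $|\nu_2|<\beta$. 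Finally,
\[
|\alpha|(3|\alpha|-\beta)-\beta(|\alpha|+\beta)=3|\alpha|^2-2|\alpha|\beta-\beta^2=(|\alpha|-\beta)(3|\alpha|+\beta)\geq0,
\]
so $|\nu_3|>\beta$, contradicting $|\nu_3|<\beta$. Hence $N_1\neq0$, $D_1$ is Lu Qi-Keng, and the reduction above transfers this to $D_3$.

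I expect the main obstacle to be the clean identification of the image of $(z,w)\mapsto\nu$ and the algebraic manipulation that produces the factorization $(|\alpha|-\beta)(3|\alpha|+\beta)$; once that factorization is in hand, the proof is forced by the triangle inequality $|\alpha|\geq\beta$. The removable-singularities step for $D_3$ is routine once one notices, as the paper already has, that $L^2$-integrability of monomials on these Reinhardt domains forces non-negative exponents in each variable.
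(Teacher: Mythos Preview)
Your proof is correct. The overall architecture matches the paper's: both use the biholomorphism $F(z_1,z_2,z_3)=(z_1,z_2,z_1z_3)$ between $D_1\setminus\{z_1=0\}$ and $D_3$ to reduce one case to the other, and both handle the remaining domain by a direct contradiction argument on the numerator of the explicit kernel.

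The direct arguments, however, are genuinely different. The paper (whose first paragraph, despite the label $D_1$, in fact works with the constraints and kernel of $D_3$) rewrites the vanishing of the numerator in a form that lets it apply the Cauchy--Schwarz inequality to the vectors $(|z_1|^2,|z_2|)$ and $(|w_1|^2,|w_2|)$; the contradiction comes from $(|\nu_1|^2+|\nu_2|)^2\le(|z_1|^4+|z_2|^2)(|w_1|^4+|w_2|^2)<|\nu_1|^2$. You instead work with the $D_1$ numerator in its natural form $3\alpha^2-\alpha(\nu_2+\nu_3)-\nu_2\nu_3$, solve for $\nu_3$, and exploit only the elementary triangle inequality $|\alpha|=|1-\nu_1|\ge 1-|\nu_1|=\beta$ together with the factorization $3|\alpha|^2-2|\alpha|\beta-\beta^2=(|\alpha|-\beta)(3|\alpha|+\beta)$. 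Your route stays entirely on the $\nu$-side and never unpacks $z$ and $w$ separately, which makes it a bit more transparent; the paper's Cauchy--Schwarz step, on the other hand, explains conceptually \emph{why} the domain constraints are exactly what is needed. Your removable-singularities justification for $K_{D_1}=K_{D_1\setminus\{z_1=0\}}$ is also more explicit than the paper's single sentence.
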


\begin{proof}
 Suppose that the Bergman  function for $D_1$ is zero at $(z, w)$.  It means that there exists $z=(z_1,z_2,z_3) \in D_1$ and $w=(w_1,w_2,w_3) \in D_1$, such that $K_{D_1}(z,w)=0$. Then by Theorem \ref{th1}, we have
$$2\nu_1^4 = (\nu_1^2 \nu_3 + \nu_1^3)(\nu_1^2 + \nu_2),$$ where $\nu_i=z_i \overline{w}_i$. Since
\begin{eqnarray*}
|z_3|^2 < |z_1|^4 + |z_2|^2, \quad |z_1|^4 + |z_2|^2 < |z_1|^2 \\ |w_3|^2 < |w_1|^4 + |w_2|^2, \quad |w_1|^4 + |w_2|^2 < |w_1|^2,
\end{eqnarray*}
then $|\nu_3|< |\nu_1|$. Hence
\begin{equation}\label{lu1}
2|\nu_1|^4 = |(\nu_1^2 \nu_3 + \nu_1^3)(\nu_1^2 + \nu_2)| < 2|\nu_1|^3(|\nu_1|^2 + |\nu_2|).
\end{equation}
from the Cauchy-Schwarz inequality, easily follows
$$(|\nu_1|^2 + |\nu_2|)^2 \leq (|z_1|^4 + |z_2|^2)(|w_1|^4 + |w_2|^2 )$$ Hence $|\nu_1|^2 + |\nu_2| < |\nu_1|$,  which is a contradiction with (\ref{lu1}). \newline
\indent  In order to prove lemma for domain $D_3$ is sufficient to use observation, that  function defined by $$F \colon D_1 \setminus \{0\} \mapsto D_3, \quad F(z_1,z_2,z_3)=(z_1,z_2,z_1 z_3)$$ is biholomorphic between domains $D_1 \setminus \{0\}$ and $D_3$.
\end{proof}
 It is well known fact, that Bergman kernel function for unit ball in $\mathbb{C}^2$ is zero free. In view of the above lemma, we can ask the following question: Is there a relationship between the existence of zeros of  the Bergman kernel function for  domains $$\{z \in \mathbb{C}^2 \colon |z_1|^p + |z_2|^q < 1\}\quad  \text{and} \quad \{z \in \mathbb{C}^3 \colon |z_1|^p + |z_2|^q < 1, \, |z_1|^p + |z_3|^q < 1 \}\,?$$

\end{document}